\documentclass[12pt]{amsart}

\setlength{\headheight}{8pt}
\setlength{\textheight}{23cm}
\setlength{\textwidth}{16cm}
\setlength{\oddsidemargin}{.1cm}
\setlength{\evensidemargin}{.1cm}
\setlength{\topmargin}{0.2cm}

\usepackage{a4}
\usepackage{amssymb}
\usepackage{amsmath}
\usepackage{amsthm}
\usepackage{amstext}
\usepackage{amscd}
\usepackage{latexsym}
\usepackage{graphics}
\usepackage{color}

\theoremstyle{plain}
\newtheorem{thm}{Theorem}[section]

\newtheorem{lemma}[thm]{Lemma}
\newtheorem{cor}[thm]{Corollary}
\newtheorem{prop}[thm]{Proposition}
\theoremstyle{definition}
\newtheorem{remark}[thm]{Remark}
\newtheorem{notation}[thm]{Notation}


\numberwithin{equation}{section}


\newcommand{\inj}{\hookrightarrow}
\newcommand{\tensor}{\otimes}

\newcommand{\intersection}{\cap}



\newcommand{\Spec}{{\rm Spec \,}}
\newcommand{\Gal}{{\rm Gal}}
\newcommand{\sO}{{\mathcal O}}
\newcommand{\sS}{{\mathcal S}}
\newcommand{\sX}{{\mathcal X}}
\newcommand{\sW}{{\mathcal W}}
\newcommand{\sY}{{\mathcal Y}}
\newcommand{\sZ}{{\mathcal Z}}
\newcommand{\A}{{\mathbb A}}
\newcommand{\B}{{\mathbb B}}
\renewcommand{\P}{{\mathbb P}}

\input{xy}
\xyoption{all}
\begin{document}

\title[Fundamental group of quotient singularities]{On the fundamental
group of a variety with quotient singularities}

\author[I. Biswas]{Indranil Biswas}

\address{School of Mathematics, Tata Institute of Fundamental
Research, Homi Bhabha Road, Bombay 400005, India}

\email{indranil@math.tifr.res.in}

\author[A. Hogadi]{Amit Hogadi}

\address{School of Mathematics, Tata Institute of Fundamental
Research, Homi Bhabha Road, Bombay 400005, India}

\email{amit@math.tifr.res.in}

\subjclass[2000]{14C05, 14F35, 4L30}

\keywords{\'Etale fundamental group, quotient singularity, symmetric product,
abelianization, Dold-Thom theorem, Hilbert scheme}

\date{}

\begin{abstract}
Let $k$ be a field, and let $\pi\, :\, \widetilde{X}\,\longrightarrow\, X$ be a proper
birational morphism of irreducible $k$--varieties, where $\widetilde{X}$ is smooth and $X$
has at worst quotient singularities. When the characteristic of $k$ is zero,
a theorem of Koll\'ar in \cite{Ko} says
that $\pi$ induces an isomorphism of \'etale fundamental groups. We give a proof of
this result which works for all characteristics. As an application, we prove that 
for a smooth projective irreducible surface $X$ over an algebraically closed field $k$,  
the \'etale fundamental group of the Hilbert scheme of
$n$ points of $X$, where $n\, >\, 1$, is canonically isomorphic to the abelianization of
the \'etale fundamental group of $X$. Koll\'ar has pointed out how the proof
of the first result can be extended to cover the case of quotients by finite group schemes.
\end{abstract}

\maketitle

\section{Introduction}

Following is the main result proved here:

\begin{thm}\label{quotient}
Let $k$ be any field and $X/k$ a connected variety which \'etale locally can be expressed as a
quotient of a smooth variety by the action of a finite group of
automorphisms of it. Let
$$
\pi\,:\,\widetilde{X}\,\longrightarrow \, X
$$
be a proper birational morphism with $\widetilde{X}$ smooth
and connected. Then for any geometric
point $\widetilde{x}_0$ of $\widetilde{X}$, the induced homomorphism 
$$
\pi_*\, :\, \pi_1^{et}(\widetilde{X},\widetilde{x}_0)\,\longrightarrow\,
\pi_1^{et}(X,\pi(\widetilde{x}_0))
$$
is an isomorphism.
\end{thm}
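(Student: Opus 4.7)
My plan is to verify the surjectivity and injectivity of $\pi_*$ separately.

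\emph{Surjectivity.} Since $X$ has quotient singularities it is normal. The pushforward $\pi_*\sO_{\widetilde X}$ is a finite $\sO_X$-algebra contained in $K(X)=K(\widetilde X)$, and normality of $X$ forces $\pi_*\sO_{\widetilde X}=\sO_X$; consequently the geometric fibres of $\pi$ are connected, and the standard criterion (SGA~1, Exp.~IX) yields surjectivity of $\pi_*$ on $\pi_1^{et}$.

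\emph{Injectivity.} Given a connected \'etale cover $f\colon\widetilde Y\to\widetilde X$, I would like to produce a finite \'etale cover $g\colon Y\to X$ with $\widetilde Y\cong Y\times_X\widetilde X$. The natural candidate is $Y:=\Spec_X\pi_*\sO_{\widetilde Y}$, the Stein factorisation of $\widetilde Y\to X$: it is finite over $X$ with $Y$ normal and $\widetilde Y\to Y$ proper birational. Letting $E\subset\widetilde X$ be the exceptional locus of $\pi$ and $Z:=\pi(E)\subset X$ (of codimension $\ge 2$ by normality), the map $Y\to X$ is visibly \'etale over $X\setminus Z$ since it coincides with $f$ there; the whole difficulty is to prove \'etaleness across the points of $Z$.

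For this I work \'etale-locally on $X$ and reduce to $X=W/G$ with $W$ smooth affine and $G$ finite. Let $\widetilde Y_W$ be the normalisation of $Y\times_X W$: it is $G$-equivariant and finite over $W$, and over the codimension-$\ge 2$ complement $W\setminus\widetilde Z$ (preimage of $X\setminus Z$) it is the base-change of $Y|_{X\setminus Z}$, hence \'etale. Zariski--Nagata purity on the smooth scheme $W$ then forces $\widetilde Y_W\to W$ to be \'etale everywhere, and one has $\widetilde Y_W/G=Y$ by construction. The task reduces to showing that $\widetilde Y_W/G\to W/G$ is \'etale, equivalently, to the stabiliser-matching property $G_{\tilde y}=G_w$ for every $\tilde y\in\widetilde Y_W$ lying above $w\in W$.

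The main obstacle is this stabiliser matching, because $G$-equivariance gives only $G_{\tilde y}\subseteq G_w$. My plan for the reverse inclusion is to exploit the smoothness of $\widetilde X$. Let $\widetilde W$ be the normalisation of $\widetilde X\times_X W$; it is proper birational over $W$ and carries a $G$-action on the second factor with $\widetilde W/G=\widetilde X$. The pulled-back cover $\widetilde Y\times_{\widetilde X}\widetilde W\to\widetilde W$ is $G$-equivariant \'etale and, by uniqueness of the \'etale extension across a codimension-$\ge 2$ subset, it is canonically isomorphic to $\widetilde Y_W\times_W\widetilde W$. \'Etaleness of the $G$-quotient $\widetilde Y\to\widetilde X$ then forces $G_{\widetilde w}\subseteq G_{\tilde y}$ whenever $\widetilde w\in\widetilde W$ and $\tilde y\in\widetilde Y_W$ lie over a common $w\in W$. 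The remaining inclusion $G_w\subseteq G_{\tilde y}$ should follow from a fixed-point argument for the $G_w$-action on the fibre $\widetilde W|_w$, using that connectedness of the fibres of $\pi$ (established in the surjectivity step) controls the geometry of $\widetilde W|_w$ above each singular point. Once $Y\to X$ is established to be \'etale, its pullback to $\widetilde X$ is an \'etale cover agreeing with $\widetilde Y$ over the dense open $X\setminus Z$ and hence isomorphic to $\widetilde Y$, completing injectivity.
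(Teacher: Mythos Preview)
Your surjectivity argument and the overall scaffolding for injectivity (Stein factorisation, passing to the local model $X=W/G$, applying Zariski--Nagata purity on $W$ to obtain the $G$-equivariant \'etale cover $\widetilde Y_W\to W$, and reducing \'etaleness of $Y\to X$ to the stabiliser-matching condition $G_{\tilde y}=G_w$) are all sound, and indeed your deduction $G_{\widetilde w}\subseteq G_{\tilde y}$ from the identification $\widetilde Y\times_{\widetilde X}\widetilde W\cong\widetilde Y_W\times_W\widetilde W$ is correct. The gap is the last sentence of your plan: the ``fixed-point argument'' you invoke to get $G_w\subseteq G_{\tilde y}$ is the entire content of the theorem, and connectedness of the fibres of $\pi$ is far from sufficient to supply it. What you need is that every element $g\in G_w$ fixes some point of the fibre $\widetilde W_w$; but $\widetilde W$ is one particular (and not even smooth) birational model, and there is no general principle guaranteeing that a finite-order automorphism of a connected proper scheme has a fixed point---certainly not in positive characteristic, where unipotent cyclic groups act in ways that defeat na\"{\i}ve Lefschetz-type reasoning.

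The paper's proof confronts exactly this obstacle and resolves it by constructing, for each cyclic subgroup $C\subseteq G_p$ of prime-power order, a tailor-made $C$-equivariant proper birational modification of $Y$ on which $C$ acts on the tangent space at a chosen fixed point through a \emph{single} character (Proposition~\ref{blowup}); one further blow-up then produces a codimension-one point with isotropy exactly $C$. The construction is a sequence of carefully chosen blow-ups, with separate arguments in the tame and wild cases, and the resulting models are then assembled into a single $G$-equivariant model via a fibre product over $Y$. In the stack language, this shows that the images of isotropy groups at codimension-one points of all birational models (the subgroup $\widetilde N_{x_0}$) already generate the full subgroup $N_{x_0}$ coming from all isotropy, which by Noohi's theorem and Lemma~\ref{smoothnoohi} is what is required. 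Your framework is compatible with this, but you have not supplied---and cannot supply without a construction of this kind---the fixed points you need; the single model $\widetilde W=(\widetilde X\times_X W)^{\mathrm{norm}}$ will not in general do the job.
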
 

When the characteristic of $k$ is zero, this is a theorem of Koll\'ar 
\cite[p. 203, Theorem (7.5.2)]{Ko}. Theorem \ref{quotient} is proved in 
Section \ref{se3}. J\'anos Koll\'ar has pointed out, \cite{ko2}, that the
proof of Theorem \ref{quotient} can be extended to establish a 
stronger result that addresses the more general situation of quotient
of smooth varieties by finite group schemes (see Theorem \ref{generalcase}).

Let $X$ be a connected 
smooth projective surface over an algebraically closed field $k$. For any
positive integer $n$, let ${\rm Hilb}^n(X)$ denote the Hilbert
scheme of $n$ points on $X$. This
${\rm Hilb}^n(X)$ is an irreducible smooth projective variety of
dimension $2n$ (\cite[p. 517, Proposition 2.3]{Fo} and \cite[p. 517, Theorem 2.4]{Fo}).
The motivation of this paper was to calculate the fundamental group
of ${\rm Hilb}^n(X)$.  When $k$ is of characteristic zero, the fundamental group
of ${\rm Hilb}^n(X)$, where $n$ is at least two, 
is already known to be isomorphic to the abelianization of the
fundamental group of $X$. The proof of this (for
characteristic zero) consists of two steps. First, one shows that the fundamental group of
${\rm Sym}^n(X)$, the $n$-th symmetric product of $X$, is the abelianization of that
of $X$ provided $n>1$. This is in fact a purely topological result (cf. Theorem 5.1
and Example 5.2 of \cite{KT}; see also \cite{DT}). Now ${\rm Hilb}^n(X)$ is a
desingularization of ${\rm Sym}^n(X)$ \cite[p. 518, Corollary 2.6]{Fo}.
The second step of the proof is provided by the above
mentioned result of Koll\'ar in \cite{Ko}.

The following is an analogue of the topological result mentioned earlier.

\begin{thm}\label{sym}
Let $k$ be an algebraically closed field, and let
$X/k$ be a variety which is integral and proper. For fixed $n\, >\,
1$, let $x_0$ and $\underline{x}$ denote geometric
points of $X$ and ${\rm Sym}^n(X)$ respectively. Then $\pi_1^{et}({\rm Sym}^n(X),
\underline{x})$ is canonically isomorphic to the abelianization of $\pi_1^{et}(X,x_0)$. 
\end{thm}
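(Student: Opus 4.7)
The plan is to compute $\pi_1^{et}({\rm Sym}^n(X))$ by comparing it with $\pi_1^{et}(X^n)$ through the quotient map $q : X^n \to {\rm Sym}^n(X) = X^n/S_n$. Using the K\"unneth-type formula for \'etale fundamental groups of proper connected varieties over an algebraically closed field, one obtains a canonical isomorphism $\pi_1^{et}(X^n,(x_0,\ldots,x_0)) \cong \pi_1^{et}(X,x_0)^n$. If $X$ is not smooth, a preliminary reduction to the smooth case is carried out by taking a resolution $\widetilde{X} \to X$ and applying Theorem \ref{quotient} to an appropriate desingularization of ${\rm Sym}^n(\widetilde{X})$, which maps properly and birationally to ${\rm Sym}^n(X)$.

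Next, let $U \subset X^n$ be the open $S_n$-invariant locus of $n$-tuples with pairwise distinct components, and set $V = U/S_n \subset {\rm Sym}^n(X)$. Since $S_n$ acts freely on $U$, the map $U \to V$ is a connected Galois \'etale cover with group $S_n$, giving
$$
1 \longrightarrow \pi_1^{et}(U) \longrightarrow \pi_1^{et}(V) \longrightarrow S_n \longrightarrow 1.
$$
The inclusion $V \hookrightarrow {\rm Sym}^n(X)$ induces a surjection $\pi_1^{et}(V) \twoheadrightarrow \pi_1^{et}({\rm Sym}^n(X))$ by normality of ${\rm Sym}^n(X)$; similarly, when $\dim X \geq 2$, Zariski--Nagata purity applied to the smooth variety $X^n$ (whose complement $X^n \setminus U$ has codimension $\geq 2$) yields $\pi_1^{et}(U) \cong \pi_1^{et}(X^n) \cong \pi_1^{et}(X)^n$.

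The heart of the argument is to identify the kernel of $\pi_1^{et}(V) \twoheadrightarrow \pi_1^{et}({\rm Sym}^n(X))$. This kernel is normally generated by inertia elements around the branch divisor $\Delta \subset {\rm Sym}^n(X)$ (the image of the big diagonal). A local analysis at a generic point of $\Delta$, where exactly two of the $n$ coordinates collide, shows that the local inertia lifts in the Galois cover $U \to V$ to a path swapping those two coordinates, so it maps to the corresponding transposition under $\pi_1^{et}(V) \to S_n$. Since transpositions generate $S_n$, the full copy of $S_n$ in $\pi_1^{et}(V)$ lies in this kernel, giving $\pi_1^{et}({\rm Sym}^n(X)) \cong \pi_1^{et}(U)/N$, where $N$ is the normal subgroup of $\pi_1^{et}(U) \cong \pi_1^{et}(X)^n$ generated by elements of the form $\sigma g \sigma^{-1} g^{-1}$ with $\sigma$ a lifted transposition and $g \in \pi_1^{et}(U)$.

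The final step is a short group-theoretic computation. Since the transposition $\tau_{ij}$ acts on $\pi_1^{et}(X)^n$ by swapping the $i$-th and $j$-th factors, the relations force $(g,1,\ldots,1)$ and $(1,g,1,\ldots,1)$ to be identified in the quotient for all $g \in \pi_1^{et}(X)$ (using $n>1$); the identity $(g,h,1,\ldots,1) = (g,1,\ldots)(1,h,1,\ldots) = (1,g,1,\ldots)(h,1,\ldots) = (h,g,1,\ldots)$ then forces $gh = hg$ in the quotient, making it abelian, and the natural map $(g_1,\ldots,g_n) \mapsto [g_1 \cdots g_n]$ realizes the desired canonical isomorphism to $\pi_1^{et}(X,x_0)^{ab}$. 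The main technical obstacle lies in the local analysis identifying the inertia with transpositions (and checking that no additional relations arise); the case $\dim X = 1$, where purity fails, must be handled separately, but ${\rm Sym}^n$ of a smooth proper curve is already smooth and admits an Abel--Jacobi map to the Jacobian, making the answer accessible by other means.
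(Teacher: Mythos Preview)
Your strategy is genuinely different from the paper's. The paper never restricts to the free locus $V$; instead it works with the quotient stack $\sX=[X^n/S_n]$, computes $\pi_1^{et}(\sX)\cong\pi_1^{et}(X)^n\rtimes S_n$ (Lemma~\ref{sn}), and then applies Noohi's Theorem~\ref{noohi} to identify $\pi_1^{et}({\rm Sym}^n(X))$ with the quotient by the normal closure $N_x$ of all isotropy images. A short specialization argument (Lemma~\ref{specialisation}) shows $N_x$ is already generated by the single isotropy group $S_n$ at the full-diagonal point, and the same group-theoretic lemma you use finishes the proof. This argument is uniform in $\dim X$ and never uses smoothness of $X$.

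Your proposal, by contrast, has a genuine gap in the reduction to smooth $X$. The theorem is stated for arbitrary integral proper $X$ over an algebraically closed field of any characteristic, and resolution of singularities is not available in positive characteristic. Even granting a resolution $\widetilde{X}\to X$, Theorem~\ref{quotient} applied to a desingularization of ${\rm Sym}^n(\widetilde{X})$ only computes $\pi_1^{et}({\rm Sym}^n(\widetilde{X}))$; the further proper birational map ${\rm Sym}^n(\widetilde{X})\to{\rm Sym}^n(X)$ is \emph{not} covered by Theorem~\ref{quotient}, since ${\rm Sym}^n(X)$ has no reason to have quotient singularities when $X$ is singular. A second issue: in your main case $\dim X\ge 2$, the complement of $V$ in ${\rm Sym}^n(X)$ has codimension $\ge 2$, so there is no ``branch divisor $\Delta$'' and the inertia-at-codimension-one picture you invoke does not literally apply. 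What you actually need---that the kernel of $\pi_1^{et}(V)\twoheadrightarrow\pi_1^{et}({\rm Sym}^n(X))$ is normally generated by the local contributions along the diagonal strata---is precisely the content of Noohi's theorem, which the paper cites rather than reproves. Finally, the curve case is only gestured at; the Abel--Jacobi argument would still need to be carried out for all $n>1$, not just $n$ large.
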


Theorem \ref{sym} is proved at the end of Section \ref{se4}.

Putting Theorem \ref{quotient} and Theorem \ref{sym} together, one gets the following.

\begin{cor}
Let $X$ be a connected smooth projective surface over an algebraically closed field $k$. For any $n\, >\, 1$, the fundamental
group of ${\rm Hilb}^n(X)$ is canonically isomorphic to the abelianization of the
fundamental group of $X$. 
\end{cor}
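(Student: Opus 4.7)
The plan is to chain Theorem \ref{quotient} and Theorem \ref{sym} together via the Hilbert--Chow morphism
$$
\pi\,:\,{\rm Hilb}^n(X)\,\longrightarrow\, {\rm Sym}^n(X).
$$
For a smooth projective surface $X$, the variety ${\rm Hilb}^n(X)$ is smooth and connected of dimension $2n$ by the Fogarty results cited in the introduction, and the Hilbert--Chow morphism is proper and birational (it restricts to an isomorphism over the open locus of unordered $n$-tuples of distinct points of $X$). Consequently, once the hypothesis of Theorem \ref{quotient} is verified for ${\rm Sym}^n(X)$, that theorem yields an isomorphism
$$
\pi_*\,:\,\pi_1^{et}({\rm Hilb}^n(X),\widetilde{x}_0)\,\xrightarrow{\sim}\,\pi_1^{et}({\rm Sym}^n(X),\pi(\widetilde{x}_0)),
$$
and composing with the canonical isomorphism furnished by Theorem \ref{sym} delivers the claimed identification with the abelianization of $\pi_1^{et}(X)$.

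The quotient--singularity hypothesis is the one item requiring a remark. By construction ${\rm Sym}^n(X)\,=\,X^n/S_n$ with $X^n$ smooth. Given a geometric point of ${\rm Sym}^n(X)$ represented by a multiset $\{x_1^{a_1},\ldots,x_r^{a_r}\}$ with the $x_i$ pairwise distinct, one chooses disjoint \'etale (or even Zariski) neighbourhoods $U_i$ of the $x_i$ in $X$; then $U_1^{a_1}\times\cdots\times U_r^{a_r}$ is a smooth variety, and dividing it by the stabilizer $S_{a_1}\times\cdots\times S_{a_r}\,\subset\, S_n$ of the tuple produces an \'etale neighbourhood of the chosen point in ${\rm Sym}^n(X)$. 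Thus ${\rm Sym}^n(X)$ is \'etale locally a quotient of a smooth variety by a finite group, exactly as required.

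I do not foresee a serious obstacle: the corollary is essentially formal once the two main theorems of the paper are in hand. The only subtlety worth flagging is the compatibility of base points and the naturality needed to justify the word \emph{canonical}; these are inherited from the canonicity built into the statement of Theorem \ref{sym} together with the functoriality of $\pi_*$ on \'etale fundamental groups, so no additional argument is required beyond a remark that the isomorphism does not depend on choices.
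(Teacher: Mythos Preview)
Your proposal is correct and follows exactly the route the paper indicates: the paper offers no proof beyond the sentence ``Putting Theorem \ref{quotient} and Theorem \ref{sym} together, one gets the following,'' and you have supplied precisely those details. One minor remark: your \'etale-local verification of the quotient-singularity hypothesis is more than needed, since ${\rm Sym}^n(X)\,=\,X^n/S_n$ is already \emph{globally} a quotient of the smooth variety $X^n$ by a finite group, so the hypothesis of Theorem \ref{quotient} is immediate.
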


\medskip
\noindent
\textbf{Acknowledgements.}\, We thank J\'anos Koll\'ar for his comments on an
initial version. Subsequently, he pointed out how Theorem \ref{quotient} could be generalized
\cite{ko2}. We are very grateful to him for this. We are very grateful to the
referee for helpful comments. The proof of Theorem \ref{generalcase} was substantially
simplified by the referee. The first--named author is supported by the
J. C. Bose Fellowship.

\section{Fundamental group of a desingularization of the coarse moduli space}\label{sec2}

The goal of this section is to give a description of the
fundamental group of a desingularization of the coarse moduli 
space of a smooth Deligne--Mumford stack (see Lemma \ref{smoothnoohi}).

We start by recalling some notation and state the main result,
\cite[Section 7]{noohi}, of Noohi in \cite{noohi}.
Let $\sX/k$ be a connected separated Deligne--Mumford stack of finite
type over a field $k$. Let $$\pi\,:\,\sX\, \longrightarrow\, X$$ be its coarse moduli space. For
every geometric point $q\,:\,\Spec(L)\, \longrightarrow\, \sX$, one has an
induced map
$$q\,:\,[\Spec(L)/G_q]\, \longrightarrow\, \sX\, ,$$
where $G_q$ is the isotropy group at $q$. When it is necessary to make an explicit
reference to $\sX$, we will write $G_{q,\sX}$ instead of $G_q$. Since the
\'etale fundamental group of
$[\Spec(L)/G_q]$ at the canonical point $\Spec(L)\, \longrightarrow\,
[\Spec(L)/G_q]$ is $G_q$, we have an induced homomorphism 
\begin{equation}\label{gq}
\phi_q\, :\, G_q\, \longrightarrow\, \pi^{et}_1(\sX,q)\, .
\end{equation}
The stack $\sX$ being connected, for any other geometric point $p$, one has an isomorphism
$$\pi_1^{et}(\sX,p)\, \stackrel{\sim}{\longrightarrow}\, \pi_1^{et}(\sX,q)$$
resulting in an equivalence class homomorphisms
\begin{equation}\label{re1}
\phi_q^{p}\,:\,G_q\, \longrightarrow\, \pi_1^{et}(\sX,p)\, ,
\end{equation}
where two homomorphisms are equivalent if they differ
by an inner automorphism of $\pi_1^{et}(\sX,p)$. In other words, we get a
homomorphism $\phi_q^{p}$ well defined up to conjugation.

Let $N_p$ denote the closed normal subgroup of $\pi_1^{et}(\sX,p)$ generated by the
images of $\phi_q^p$ for all geometric points $q$ of $\sX$. 

\begin{thm}[\cite{noohi}]\label{noohi}
Let $\sX\, ,\pi$ and $X$ be as above. For a geometric point $p$ of $\sX$, the corresponding point
of $X$ is also denoted by $p$. There is a natural sequence of groups
$$ 1\, \longrightarrow\, N_p \, \longrightarrow\, \pi_1^{et}(\sX,p)
\, \longrightarrow\,\pi_1^{et}(X,p)\, \longrightarrow\, 1$$
which is exact.
\end{thm}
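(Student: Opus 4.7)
The plan is to prove the exact sequence in three stages: surjectivity of $\pi_*$, the inclusion $N_p \subseteq \ker(\pi_*)$, and the reverse inclusion $\ker(\pi_*) \subseteq N_p$. Throughout I would work via the equivalence between finite \'etale covers of a connected Deligne--Mumford stack and finite continuous sets with an action of its \'etale fundamental group, so that all three statements become assertions about monodromy.

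For surjectivity I would exploit that the pullback functor $\pi^*$ from finite \'etale covers of $X$ to finite \'etale covers of $\sX$ is fully faithful: every finite \'etale $Y \to X$ pulls back to the representable cover $Y \times_X \sX \to \sX$, and the coarse moduli functor applied to this pullback recovers $Y$. Thus every finite continuous $\pi_1^{et}(X,p)$-set arises from a $\pi_1^{et}(\sX,p)$-set, giving surjectivity of $\pi_*$.

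For $N_p \subseteq \ker(\pi_*)$, observe that for any finite \'etale $Y \to X$ the pullback $\sY := Y \times_X \sX \to \sX$ is representable, so at any geometric point $q$ of $\sX$ the isotropy group of $\sY$ is trivial. The homomorphism $\phi_q$ of (\ref{gq}) describes the monodromy of representable \'etale covers at $q$ through the factorization $G_q \to G_{q,\sY} = 1$, so the image of $\phi_q^p$ acts trivially on the fiber of any pulled-back cover and hence lies in $\ker(\pi_*)$. The closed normal subgroup $N_p$ generated by these images therefore sits inside $\ker(\pi_*)$.

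The reverse inclusion is the crux. Given a connected finite \'etale $\sY \to \sX$ on which $N_p$ acts trivially via monodromy, the goal is to show $\sY$ descends to a finite \'etale cover of $X$. The triviality of the $N_p$-action translates, fiber by fiber, to $G_{q,\sY} = 1$ for every geometric point $q$, i.e.\ $\sY$ is representable by an algebraic space \'etale over $\sX$. Its coarse moduli space $Y$ is then finite \'etale over $X$, because coarse moduli commutes with flat base change along representable morphisms and $\pi$ is proper and quasi-finite; moreover $\sY \cong Y \times_X \sX$ by the universal property of $Y$. The main obstacle is precisely this last stage: converting pointwise triviality of the $N_p$-action into representability of $\sY$, and then descending a representable \'etale cover along the coarse moduli morphism. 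These steps require the stack-theoretic comparison results of \cite{noohi}, in particular the identification of the monodromy action of $G_q$ on the geometric fiber of $\sY$ at $q$ with the natural $G_q$-action coming from the $2$-Cartesian square defining the residual gerbe $[\Spec(L)/G_q] \to \sX$.
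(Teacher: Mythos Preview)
The paper does not give its own proof of this theorem: it is quoted from Noohi's paper \cite{noohi}, and only the consequence recorded in Remark~\ref{coarsenoohi} is used. So there is no argument in the paper to compare your proposal against.

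That said, your third step contains a genuine error. You assert that if $N_p$ acts trivially on the monodromy of a connected finite \'etale cover $\sY\to\sX$, then $G_{q,\sY}=1$ at every geometric point, so that $\sY$ is an algebraic space. This is backwards. A finite \'etale cover in the sense relevant to $\pi_1^{et}$ is already representable; for a geometric point $y$ of $\sY$ lying over $q$ in $\sX$, the induced map $G_{y,\sY}\hookrightarrow G_{q,\sX}$ is injective and identifies $G_{y,\sY}$ with the \emph{stabilizer} of $y$ under the $G_q$--action on the geometric fibre. Saying that $G_q$ acts trivially on the fibre therefore says that every stabilizer is all of $G_q$, i.e.\ $G_{y,\sY}\to G_{q,\sX}$ is an \emph{isomorphism}, not that $G_{y,\sY}$ vanishes. (For a sanity check take $\sX=BG$: the only covers with trivial $N_p$--action are disjoint unions of copies of $BG$, which have isotropy $G$ everywhere.)

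The correct mechanism, which is exactly what Remark~\ref{coarsenoohi} records, is the opposite one: because the isotropy is \emph{preserved} along $\sY\to\sX$, the induced map $W\to X$ of coarse moduli spaces is finite \'etale, and then $\sY\cong W\times_X\sX$. Your first two steps are fine, and your instinct to pass to coarse moduli in the third step is right; what fails is the intermediate claim that $\sY$ becomes a space.
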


\begin{remark}\label{coarsenoohi}
We continue with the above notation. Let $\sW\,\longrightarrow\, \sX$ be a 
Galois \'etale cover such that $N_p$ acts trivially on $\sW$. Then the 
above theorem of Noohi says that $\sW$ is induced by a Galois \'etale cover of 
$X$. It follows from property (M4) in \cite[p. 83]{noohi}
of the coarse moduli space functor that this \'etale cover of $X$
is nothing but the coarse moduli space of $\sW$. This coarse moduli space will
be denoted by $W$. Thus the following conclusion may be deduced from the above 
theorem of Noohi: For a Galois \'etale cover $\sW\,\longrightarrow\, \sX$, the induced
map $W\,\longrightarrow\, X$ on the
coarse moduli spaces is \'etale if and only if $\sW$ is the pullback of $W$ to $\sX$.
\end{remark}

For our purpose, we would also like to have an explicit relation between the 
fundamental group of a smooth Deligne--Mumford stack $\sX$ and that of a
desingularization of 
its coarse moduli space, whenever such a desingularization exists. For that purpose we set up the following notation.

\begin{notation}\label{ntildebx}
For a smooth Deligne--Mumford stack $\sX$, let $ET(\sX)$ denote the category of finite
\'etale covers of $\sX$. Let $\B_{\sX}$ be the set of all isomorphism classes of
$1$-morphisms $f\,:\,\sY\,\longrightarrow\, \sX$, where $\sY$ is a connected normal
Deligne--Mumford stack, and $f$ is proper, representable and birational. Now consider
the set $\Sigma_{\sX}$ of all pairs of the form $(\sY\stackrel{f}{\to} \sX\, ,q)$, where
$f\,\in\, \B_{\sX}$, and $q$ is a codimension one geometric point of $\sY$. For any such
pair, we have a homomorphism
\begin{equation}\label{re2}
\phi_{q,\sX}\,:\, G_{q,\sY}\,\longrightarrow\, \pi_1^{et}(\sX,x_0)
\end{equation}
defined by the composition
$$
G_{q,\sY}\,\longrightarrow\, G_{p,\sX}\,
\stackrel{\phi_p}{\longrightarrow}\, \pi_1^{et}(\sX,p)\, ,
$$
where $p\,=\,f(q)$ and $\phi_p$ is as in \eqref{gq}. We note that the
homomorphism $\phi_{q,\sX}$ in \eqref{re2} is well defined up to an inner
automorphism of $\pi_1^{et}(\sX,x_0)$ (this was explained earlier
in \eqref{re1}). For a
geometric point $x_0$ of $\sX$, let $\widetilde{N}_{x_0}$ denote the closed
normal subgroup generated by the union of the images of $\phi_{q,\sX}$ for
all pairs $(\sY,q)$ in $\Sigma_{\sX}$.
\end{notation}

\begin{lemma}\label{ff}
Let $\sZ$ be a regular algebraic stack and $i\,:\,U\,\hookrightarrow\,
\sZ$ be a dense open subset. Let  
$$i^*\,:\,ET(\sZ)\,\longrightarrow\, ET(U)$$
be the pullback functor. Then the following hold:
\begin{enumerate}
\item $i^*$ is fully faithful.
\item If the complement of $U$ in $\sZ$ has codimension at least two, then $i^*$ is an equivalence of categories. 
\end{enumerate}
\end{lemma}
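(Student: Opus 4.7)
The plan is to deduce both parts from classical extension theorems for finite \'etale covers over regular bases, transferred to the stack setting by \'etale descent from an \'etale presentation of $\sZ$.

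For (1), I would reformulate a $\sZ$-morphism $V_1 \to V_2$ between two objects of $ET(\sZ)$ as a section of the finite \'etale projection $V_1 \times_\sZ V_2 \to V_1$, equivalently a clopen substack of $V_1 \times_\sZ V_2$ on which the first projection is an isomorphism. Since $\sZ$ is regular, both $V_1$ and $V_1 \times_\sZ V_2$ are regular, hence locally noetherian with connected components that coincide with their irreducible components. The density of $U$ in $\sZ$ pulls back to density of $(V_1 \times_\sZ V_2)|_U$ inside $V_1 \times_\sZ V_2$, so Zariski closure gives a bijection between clopen substacks of $V_1 \times_\sZ V_2$ and of its restriction to $U$. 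Because the degree of a finite \'etale morphism is locally constant, such a clopen maps isomorphically onto $V_1$ if and only if its restriction maps isomorphically onto $V_1|_U$. This produces the required bijection on Hom sets and proves full faithfulness.

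For (2), to establish essential surjectivity I would extend any finite \'etale cover $V \to U$ to $\sZ$ by normalization: take $\widetilde V \to \sZ$ to be the normalization of $\sZ$ in $V$, a construction which is \'etale-local on $\sZ$ and hence descends to the stack setting. By construction it is finite, $\widetilde V$ is normal, and its restriction over $U$ recovers $V$. The crucial step is to show that $\widetilde V \to \sZ$ is \'etale, which may be checked after passing to a smooth or \'etale chart and thus reduces to a scheme-theoretic statement. There one applies Zariski--Nagata purity of the branch locus: for a finite dominant morphism from a normal scheme onto a regular noetherian scheme, the branch locus is either empty or pure of codimension one. In our setting the branch locus is contained in $\sZ \setminus U$, which has codimension at least two, so it must be empty; consequently $\widetilde V \to \sZ$ is finite \'etale and provides the sought extension.

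The only substantial external input is Zariski--Nagata purity; the remaining ingredients---descent of the normalization from an \'etale chart to the stack, preservation of regularity and normality under \'etale pullback, and the identification of sections of finite \'etale morphisms with clopen substacks---are routine. The most delicate bookkeeping is verifying that the codimension condition survives passage to an \'etale chart, but this follows from flatness and equidimensionality of the chart morphism, so I do not expect it to pose a serious obstacle.
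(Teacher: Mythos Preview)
Your proposal is correct and follows essentially the same approach as the paper: for (1) both arguments identify a morphism over $U$ with (the graph of) a section of $V_1\times_\sZ V_2 \to V_1$ and extend by taking closure, with the paper reducing locally to trivial covers while you phrase it via the bijection of clopen substacks; for (2) both reduce \'etale-locally to schemes, extend to a finite normal cover, and invoke Zariski--Nagata purity of the branch locus. The only cosmetic difference is that the paper first extends to a finite map using \cite[8.12.6]{ega4} and then normalizes, whereas you go directly to the normalization of $\sZ$ in $V$, which amounts to the same thing.
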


\begin{proof}(1) Let $f_i\,:\,W_i\,\longrightarrow\, \sZ$, $i\,=\,1\, ,2$, be two finite
\'etale covers, and denote by $W_{iU}$ their restrictions to $U$. We need to show that the map 
\begin{equation}\label{beta}
\beta\, :\, {\rm Hom}_{\sZ}(W_1,W_2) \,\longrightarrow\, {\rm Hom}_{U}(W_{1U},W_{2U})
\end{equation}
is a bijection.
Since the maps $f_i$ are separated, it is easy to see that the
map $\beta$ in \eqref{beta} is injective. Now take any
$h\,\in\, {\rm Hom}_{U}(W_{1U},W_{2U})$. Let $\Gamma\subset W_1\times_{\sZ}W_2$ be the
closure of the graph of $h$. It is enough to show that $\Gamma$ is a graph of a morphism,
i.e., the projection $\varpi\,:\,\Gamma \,\longrightarrow\, W_1$ is an isomorphism.
That $\varpi$ is an isomorphism can be shown locally on $\sZ$.
Thus we may assume $\sZ$ and $W_i$'s are schemes. Moreover, by further \'etale
base change, we may assume $W_i$'s are trivial \'etale covers. In this case, the claim is obvious. \\

(2) By (1),  we only need to show essential surjectivity of $i^*$. In other words,
we have to show that
any finite \'etale cover $T\,\longrightarrow\, U$ extends uniquely to a  finite \'etale cover $T'\,\longrightarrow\, \sZ$. The claimed uniqueness
and descent allows us to prove the claim \'etale (or even smooth) locally on $\sZ$. Thus we may base extend to an atlas of $\sZ$ and assume that $\sZ$ is a scheme. In this case, by \cite[8.12.6]{ega4}, we 
can extend $T\,\longrightarrow\, U$ to a finite map $T'\,\longrightarrow\,
\sZ$. By normalizing $T'$, we may further assume that $T'$ is normal. By
purity of branched locus (see \cite[p. 212, X.3.1]{sga1}) it
follows that $T'\,\longrightarrow\, \sZ$ is finite \'etale.
\end{proof}

\begin{lemma}\label{highermodel}
Let $\{f\,:\,\sY\,\longrightarrow\, \sX\} \,\in\, \B_{\sX}$. Let $\sY_{sm}$ denote the
smooth locus of $\sY$. Then 
$$f^*_{ET}: ET(\sX) \,\longrightarrow\, ET(\sY_{sm})$$ is an equivalence of categories.
\end{lemma}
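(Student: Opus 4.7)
\smallskip

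\noindent\textbf{Proof plan.} The plan is to factor the functor as
$$
ET(\sX)\,\stackrel{f^*}{\longrightarrow}\, ET(\sY)\,\stackrel{j^*}{\longrightarrow}\, ET(\sY_{sm})\, ,
$$
where $j\,:\,\sY_{sm}\hookrightarrow \sY$ is the inclusion of the smooth locus, and to show that each of $f^*$ and $j^*$ is an equivalence. The second factor $j^*$ is immediate: since $\sY$ is normal, Serre's criterion ($R_1+S_2$) ensures that $\sY\setminus \sY_{sm}$ has codimension at least two in $\sY$, so Lemma~\ref{ff}(2) applies.

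The key geometric input for $f^*$ is that, because $\sX$ is regular and $f$ is proper birational, the open locus $U\subset \sX$ on which $f$ is an isomorphism has codimension-$\geq 2$ complement. I would derive this from the valuative criterion of properness: at any codimension-one point $x\in\sX$ the local ring $\sO_{\sX,x}$ is a DVR, and the valuative criterion together with the normality of $\sY$ forces $f$ to be an isomorphism over $x$. Writing $U' := f^{-1}(U)$, one has $U'\cong U$ and, since $\sX$ is smooth, $U'\subset \sY_{sm}$.

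Full faithfulness of $f^*$ then follows by a double-restriction argument: given a $\sY$-morphism of pullbacks $f^*\sW_1\to f^*\sW_2$, restrict to $U'$, transport across $U'\cong U$, and extend uniquely to an $\sX$-morphism $\sW_1\to \sW_2$ via Lemma~\ref{ff}(2), applicable because $\sX\setminus U$ has codimension $\geq 2$. Compatibility with the original $\sY$-morphism is seen after further restriction to $\sY_{sm}$, using Lemma~\ref{ff}(1) and agreement on the dense open $U'\subset \sY_{sm}$. For essential surjectivity, given a connected finite \'etale cover $T\to \sY$, I would define $V\to \sX$ to be the normalization of $\sX$ in the finite separable extension $K(T)/K(\sX)$ (noting $K(\sX)=K(\sY)$ as $f$ is birational). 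Then $V\to\sX$ is finite, and over $U$ it coincides with $T|_{U'}$ via $U'\cong U$, since an \'etale cover of the regular open $U'$ is normal and so equals the normalization of $U'$ in its own function field; in particular $V\to \sX$ is \'etale over $U$. Purity of the branch locus (Zariski--Nagata, cf.\ \cite[X.3.1]{sga1}) applied to the regular $\sX$ upgrades this to \'etaleness over all of $\sX$. Finally, both $f^*V$ and $T$ are finite normal covers of $\sY$ with function field $K(T)$, hence each equals the normalization of $\sY$ in $K(T)$, and therefore $f^*V\cong T$.

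The main obstacle I anticipate is verifying that normalization in a function field, purity of the branch locus, and uniqueness of normalization all carry over from schemes to Deligne--Mumford stacks. Each ingredient is \'etale-local on $\sX$ and on $\sY$, so passing to \'etale atlases should reduce the question to the scheme case; the nontrivial bookkeeping lies in ensuring compatibility between the stacky and schematic normalizations under the representable birational morphism $f$.
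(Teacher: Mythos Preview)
There is a genuine gap in your factorization. You apply Lemma~\ref{ff}(2) to the inclusion $j:\sY_{sm}\hookrightarrow\sY$ to conclude that $j^*:ET(\sY)\to ET(\sY_{sm})$ is an equivalence, but Lemma~\ref{ff} is stated (and proved) under the hypothesis that the ambient stack is \emph{regular}. Here $\sY$ is only normal, and purity of the branch locus genuinely fails over a merely normal base: a finite \'etale cover of $\sY_{sm}$ need not extend to a finite \'etale cover of $\sY$. So your justification for the equivalence $ET(\sY)\simeq ET(\sY_{sm})$ is invalid, and there is no cheap substitute---indeed, that equivalence is essentially equivalent in difficulty to the lemma itself.

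The paper sidesteps this by never passing through $ET(\sY)$. It compares $ET(\sX)$ and $ET(\sY_{sm})$ directly via the common dense open $f^{-1}(U)\cap\sY_{sm}\cong U$, applying Lemma~\ref{ff}(2) only on the regular stacks $\sX$ and $\sY_{sm}$, and Lemma~\ref{ff}(1) for the gluing. Your normalization-plus-purity argument for essential surjectivity is perfectly sound, just aimed at the wrong functor: if you run it for the composite $ET(\sX)\to ET(\sY_{sm})$ instead of for $f^*:ET(\sX)\to ET(\sY)$---i.e.\ start from $T\to\sY_{sm}$, normalize $\sX$ in $K(T)$, use purity on the regular $\sX$ to get $V\to\sX$ \'etale, and then compare $f^*V|_{\sY_{sm}}$ with $T$ via Lemma~\ref{ff}(1) on the regular $\sY_{sm}$---you obtain a correct proof that is a genuine alternative to the paper's open-subset bookkeeping.
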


\begin{proof}
Since $\sX$ is smooth and $f$ is proper and birational, there exists an open 
sub-stack $U\,\subset\, \sX$ whose complement has codimension at least two and 
$f^{-1}(U)\,\stackrel{f}{\longrightarrow}\, U$ is an isomorphism. Thus $f^{-1}(U)\intersection \sY_{sm}$ is an open subset of $\sY_{sm}$ which maps isomorphically onto an open subset of $\sX$. By Lemma \ref{ff}(1), 
$$ET(\sX)\,\longrightarrow\, ET(\sY_{sm})$$
is fully faithful, and we only need to show it is essentially surjective.

Now $ET(U)\, \longrightarrow\, ET(f^{-1}(U))$ is an equivalence of categories. As $\sY$ is
normal, it follows that the complement of $\sY_{sm}$ in $\sY$ has
codimension at least two. Therefore, the composition
$$ET(\sX) \stackrel{\rm purity}{\longrightarrow} ET(U) \,\longrightarrow\, 
ET(f^{-1}(U))\stackrel{\rm purity}{\longrightarrow}
ET(\sY_{sm}\intersection f^{-1}(U))$$
is an equivalence of categories. Here the equivalences labelled by ``purity'' follow from
Lemma \ref{ff}(2). Thus if $\sW\,\longrightarrow\, \sY_{sm}$ is a finite
\'etale cover, then its restriction to $\sY_{sm}\intersection f^{-1}(U)$ is a pullback of a
finite \'etale cover of $\sW'\,\longrightarrow\, \sX$. By Lemma \ref{ff}(1), this pullback
and $\sW$ must be isomorphic, because they are isomorphic
after restricting to a dense open subset. This shows that
$ET(\sX)\, \longrightarrow\, ET(\sY_{sm})$ is essentially surjective, as required.
\end{proof}

For $\sY_1\, ,\sY_2 \,\in\, \B_{\sX}$, we denote the normalization of the unique dominant
irreducible component of $\sY_1\times_{\sX}\sY_2$ by $\left(\sY_1\times_{\sX}
\sY_2\right)^0$. 

\begin{lemma}\label{smoothnoohi}
Let $\sX/k$ be a smooth connected Deligne--Mumford stack, and let
$\sX \,\stackrel{q}{\longrightarrow} \,X$ be its coarse moduli space.
Let $\pi\,:\,\widetilde{X}\,\longrightarrow\, X$ be a proper birational morphism with
$\widetilde{X}/k$ smooth. Let $x_0$ be a geometric point of $\sX$ whose image in $X$ lies
in the open subset over which $\pi$ is an isomorphism, thus $x_0$ also defines a geometric
point of $\widetilde{X}$. Then there is a natural short exact sequence
of groups
$$ 1 \,\longrightarrow\, \widetilde{N}_{x_0}\,\longrightarrow\, \pi_1^{et}(\sX,x_0)
\,\longrightarrow\,\pi_1^{et}(\widetilde{X},x_0)\,\longrightarrow\, 1\, .$$
\end{lemma}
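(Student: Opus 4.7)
My plan is to realize the natural map $\psi\colon \pi_1^{et}(\sX,x_0)\to \pi_1^{et}(\widetilde{X},x_0)$ as the effect on fundamental groups of a pullback-style functor $\Phi\colon ET(\widetilde{X})\to ET(\sX)$, constructed as follows. Set $\sY\,:=\,(\sX\times_X\widetilde{X})^0$, the normalization of its unique dominant irreducible component. Base change of the representable proper birational $\pi$ along $q\colon\sX\to X$ stays representable, proper and birational, so $\sY\in\B_{\sX}$; let $g\colon\sY\to\widetilde{X}$ denote the second projection. Lemma \ref{highermodel} gives an equivalence $ET(\sX)\xrightarrow{\sim} ET(\sY_{sm})$, and I define $\Phi$ by composing pullback along $g|_{\sY_{sm}}$ with a quasi-inverse of this equivalence.

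Surjectivity of $\psi$ is equivalent to full faithfulness of $\Phi$, and this is the easier half. Pick a dense open $V\subset X$ contained in the isomorphism loci of both $q$ and $\pi$; then $V$ embeds as a dense open substack of $\sX$ and as a dense open subscheme of $\widetilde{X}$. Lemma \ref{ff}(1) (applied to the regular $\sX$ and $\widetilde{X}$) makes both restriction functors $ET(\sX)\to ET(V)$ and $ET(\widetilde{X})\to ET(V)$ fully faithful; by construction $\Phi$ is compatible with these two restrictions, so $\Phi$ itself is fully faithful.

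The main work is identifying $\ker\psi$ with $\widetilde{N}_{x_0}$. For the inclusion $\widetilde{N}_{x_0}\subset\ker\psi$: if $\sW\,=\,\Phi(\widetilde{W})$ then $\sW|_{\sY_{sm}}$ is pulled back from the scheme $\widetilde{X}$, so the isotropy of $\sY$ at every point of $\sY_{sm}$ acts trivially on $\sW$; for a general $(\sY',q)\in\Sigma_{\sX}$, I would replace $\sY'$ by $(\sY\times_{\sX}\sY')^0\in\B_{\sX}$ and choose a lift $q''$ in its smooth locus, using representability to identify $\phi_{q,\sX}$ with $\phi_{q'',\sX}$ up to conjugation in $\pi_1^{et}(\sX,x_0)$. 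For the converse, given a Galois $\sW\to\sX$ on which $\widetilde{N}_{x_0}$ acts trivially, pull back to $\sY$; by applying the hypothesis to further blowups in $\B_{\sX}$ that turn higher-codimension stacky loci of $\sY$ into codimension-one ones, the isotropy of $\sY$ at \emph{every} geometric point acts trivially on $\sW|_{\sY}$, so Theorem \ref{noohi} for $\sY$ descends this pullback to the coarse moduli $Y$ of $\sY$. Finally, $Y\to\widetilde{X}$ is proper birational with $\widetilde{X}$ smooth, and combining Lemma \ref{ff}(2) on the codimension $\geq 2$ part of its exceptional locus with the hypothesis-controlled codimension-one part yields a finite \'etale $\widetilde{W}\to\widetilde{X}$ with $\Phi(\widetilde{W})=\sW$. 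The principal obstacle is precisely this last extension step: the non-isomorphism locus of $Y\to\widetilde{X}$ may meet codimension one in $\widetilde{X}$, so Lemma \ref{ff}(2) does not apply out of the box, and one must use that $\widetilde{N}_{x_0}$ assembles isotropy data from \emph{every} $\sY'\in\B_{\sX}$, not just from $\sX$ itself, to certify that the extension across codimension-one exceptional components is actually étale.
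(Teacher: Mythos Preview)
Your overall architecture---build a functor $ET(\widetilde{X})\to ET(\sX)$, deduce surjectivity of $\psi$ from full faithfulness, then identify the essential image---matches the paper's proof. Your construction of the functor via $\sY=(\sX\times_X\widetilde{X})^0$ and Lemma~\ref{highermodel} is a legitimate variant of the paper's route through $X_{sm}$ and purity, and your argument for $\widetilde{N}_{x_0}\subset\ker\psi$ is essentially the paper's Step~(2).

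The genuine gap is in your converse inclusion. You assert that, after ``further blowups in $\B_{\sX}$ that turn higher-codimension stacky loci of $\sY$ into codimension-one ones,'' the isotropy of $\sY$ at \emph{every} geometric point acts trivially on $\sW|_{\sY}$. This amounts to claiming that for each point $p$ of $\sY$ the image of $G_{p,\sY}$ in $\pi_1^{et}(\sX,x_0)$ lies in $\widetilde{N}_{x_0}$. But $\widetilde{N}_{x_0}$ is generated only by images of isotropy at \emph{codimension-one} points of stacks in $\B_{\sX}$; even if you blow up to produce a codimension-one point $q$ over $p$, the induced map $G_{q}\to G_{p}$ is typically a proper inclusion, and there is no reason the images of all such $G_q$ generate the image of $G_p$. (Indeed, establishing exactly this kind of statement is the content of Section~\ref{se3} and is the hard input into Theorem~\ref{quotient}, not something available at this stage.) So you cannot invoke Theorem~\ref{noohi} to descend $\sW|_{\sY}$ to an \'etale cover of all of $Y$.

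The paper avoids this by never asking for \'etaleness over all of $Y$. It passes to the induced finite map $W'\to Y$ on coarse moduli and uses Lemma~\ref{last} to translate the defining condition of $ET(\sX)'$ (triviality of $\sW\times_{\sX}BG_q$ for $(\sY,q)\in\Sigma_{\sX}$) directly into \'etaleness of $W'\to Y$ at each codimension-one point of $Y$. Purity on $Y_{sm}$ then extends this to the smooth locus, and Lemma~\ref{highermodel} carries it to $\widetilde{X}$. Note also that your stated ``principal obstacle''---that the non-isomorphism locus of $Y\to\widetilde{X}$ might meet codimension one in $\widetilde{X}$---is not an obstacle: since $\widetilde{X}$ is smooth and $Y\to\widetilde{X}$ is proper birational, Lemma~\ref{highermodel} already gives $ET(\widetilde{X})\simeq ET(Y_{sm})$. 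The real difficulty sits one step earlier, in getting from $\sY$ down to $Y_{sm}$, and the missing ingredient in your outline is precisely Lemma~\ref{last}.
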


\begin{proof}We will prove this in three steps.\\

\noindent
\underline{Step(1)}:\, We first observe that there exists a canonical functor
$$ET(\widetilde{X})\,\longrightarrow\, ET(\sX)$$ defined as follows. Given a finite
\'etale cover of $\widetilde{X}$, restrict it to $\pi^{-1}(X_{sm})$, where $X_{sm}\,
\subset\, X$ is the smooth locus. Since $X_{sm}$ is by definition smooth, this
cover descends to define a finite \'etale cover of $X_{sm}$ and hence of
$\alpha^{-1}(X_{sm})$, where
$$
\alpha\,:\,\sX\,\longrightarrow\, X
$$
is the coarse moduli map. But
$\alpha^{-1}(X_{sm})$ is an open sub-stack of $\sX$ whose complement has codimension at least
two, and hence by purity, this extends uniquely to give a finite \'etale cover of $\sX$.
It is also easy to see that the functor $ET(\widetilde{X})\,\longrightarrow\, ET(\sX)$
defined this way sends connected \'etale covers to connected \'etale covers.
Therefore, we get a surjective group homomorphism 
\begin{equation}\label{f1}
\pi_1^{et}(\sX,x_0)\,\longrightarrow\, \pi_1^{et}(\widetilde{X},x_0)\, .
\end{equation}\\

\noindent \underline{Step(2)}:\, We now claim that $\widetilde{N}_{x_0}$ is in the kernel
of the homomorphism in \eqref{f1}.

To prove this claim, given any pair $(\sY,y_0)\,\in\, \Sigma_{\sX}$, define
$$\sY_1\,:=\, (\sY\times_X\widetilde{X})^0\, ,$$
and let $y_1$ be the unique lift of $y_0$ in $\sY_1$.
Clearly we have $\sY_1\,\in\, \B_{\sX}$. Since $\sY$ and $\sY_1$ are normal, the map
$\sY_1\,\longrightarrow\, \sY$ is an isomorphism over codimension one points
of $\sY$. Consequently, the isotropy groups at $y_0$ and $y_1$ coincide. By replacing $\sY$ by $\sY_1$, we
may assume, without any loss of any generality, that we have a $1$-morphism $g\,:\,\sY
\,\longrightarrow\, \widetilde{X}$ such that the following diagram is commutative
\begin{equation}\label{xyd}
\xymatrix{
   &  \sY\ar[dl]_{f} \ar[dr]^g  & \\
\sX\ar[dr]_\alpha    &           & \widetilde{X}\ar[dl]^{\pi} \\
     & X &   
}
\end{equation}
Let $y_0'$ be a geometric point of $\sY$ such that $g(y_0')\,=\,x_0$. 
Since $\widetilde{X}$ is a variety, the composition
homomorphism $G_{y_0}\,\longrightarrow\,
\pi_1^{et}(\sY,y_0')\,\longrightarrow\, \pi_1^{et}(\widetilde{X},x_0)$ is trivial. 
This proves the claim.\\

\noindent \underline{Step(3)}:\, To prove the theorem it remains to show that the kernel
of the homomorphism in \eqref{f1} is contained in $\widetilde{N}_{x_0}$. 

Take any $(\sY\, ,q)\,\in \,\Sigma_{\sX}$. For the geometric point $q$, represented by
$\Spec(k_q)\,\longrightarrow\, \sY$, with $k_q$ separably closed, there is a natural map $$BG_q\,=\,[\Spec(k_q)/G_q] \,
\longrightarrow\, \sY$$ and hence we get a map $BG_q\,\longrightarrow\, \sX$.
Let $ET(\sX)'$ be the full subcategory of $ET(\sX)$ consisting of finite \'etale
covers $W\,\longrightarrow\, \sX$ which have the property that for any
$(\sY\, ,q)\,\in \,\Sigma_{\sX}$, the pullback of $W\times_{\sX} BG_q \,\longrightarrow\, BG_q$ is a
trivial cover (this means that each connected component of it is isomorphic to $BG_q$). 

If $W\,\longrightarrow\, \sX$ is a Galois cover, then the above condition is equivalent to
the condition that the image of $G_q$ in $\pi_1^{et}(\sX,x_0)$ acts trivially on
$W$. Note however that the kernel of the homomorphism $\pi_1^{et}(\sX,x_0)
\,\longrightarrow\, \Gal(W/\sX)$ is a normal subgroup. And hence the condition that all $G_q$'s are contained in the normal subgroup is equivalent to the condition that $\widetilde{N}_{x_0}$ is contained in the kernel. Thus, $ET(\sX)'$, together with the fiber functor defined by the point $x_0$, is a Galois category whose associated Galois group is $\pi_1^{et}(\sX,x_0)/\widetilde{N}_{x_0}$.

Consequently, in order to prove that the kernel is contained in $\widetilde{N}_{x_0}$, it 
suffices to show that the pullback functor defined in Step (1)
$$ ET(\widetilde{X})\,\longrightarrow\, ET(\sX)$$
induces an equivalence of $ET(\sX)$ with $ET(\sX)'$. As, observed before, 
the pullback functor is fully faithful. Thus it remains to show
that its essential image coincides with $ET(\sX)'$, in other
words, any \'etale cover 
in $ET(\sX)'$ is a pullback of an \'etale cover of $\widetilde{X}$. Define
$$\sY\,:=\,(\sX\times_X\widetilde{X})^0\, .$$ Then we have
$g\,:\,\sY\,\longrightarrow\, \widetilde{X}$ and a commutative diagram as 
in \eqref{xyd}. Let $$h\,:\,\sY\,\longrightarrow\, Y$$ be the coarse moduli space. 
The morphism $g$ as in \eqref{xyd}
factors through $h$ by the definition of a coarse moduli space. 

Now, let $W\,\longrightarrow\, \sX$ be a Galois \'etale cover contained in 
$ET(\sX)'$. 
Let
$$
W_{\sY}\,=\,W\times_{\sX}\sY \,\longrightarrow\, \sY
$$
be the Galois \'etale cover. Consider the induced map
$$
\xi\, :\, W'\,\longrightarrow\, Y
$$
on the coarse moduli spaces, where $W'$ is the coarse moduli space of $W$.
We note that $\xi$ is a finite map. Let $V\,\subset\,
Y$ be the maximal open subset of $Y$ over which this map is \'etale. By
Lemma \ref{last} below, and definition of $Et(\sX)'$, this subset $V$ contains all codimension one points of $Y$. By
purity, the Galois \'etale cover $W'\vert_V\,\longrightarrow\, V$ extends to a Galois \'etale cover of the smooth locus of $Y$ and hence also to a Galois \'etale cover of $\widetilde{X}$ by Lemma \ref{highermodel}. 
\end{proof}

\begin{lemma}\label{last}
Let $\sY/k$ be an orbifold and $\sW\,\longrightarrow\, \sY$ a finite \'etale cover.
The coarse moduli spaces for $\sY$ and $\sW$ will be denoted by $Y$ and $W$ respectively.
Let $W\,\longrightarrow\, Y$ be the map induced by the \'etale cover. Let $q$ be a
geometric point of $Y$ represented by $\Spec(k_q)\,\longrightarrow\, \sY$, where $k_q$ is
separably closed. Let
$$
BG_q\,=\,[\Spec(k_q)/G_q]\,\longrightarrow\, \sY
$$
be the induced map. Then the following are equivalent:
\begin{enumerate}
\item $\sW\times_\sY BG_q \,\longrightarrow\, BG_q$ is a trivial \'etale cover (i.e.,
isomorphic to the projection from a disjoint union of copies of $BG_q$).
\item $W\,\longrightarrow\,  Y$ is \'etale over the image of $q$ in $Y$. 
\end{enumerate}
\end{lemma}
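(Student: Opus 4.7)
The plan is to verify the equivalence after reducing to a local model on $\sY$. Since both conditions are \'etale-local on $Y$ at $\bar q$ := image of $q$, and $\sY$ is a smooth Deligne--Mumford stack with trivial generic inertia (an orbifold), I may shrink to an \'etale neighborhood and assume $\sY = [U/G_q]$, where $U$ is a smooth $k_q$-scheme carrying a faithful $G_q$-action and a fixed point $\tilde q$ lying over $\bar q$, so that $Y = U/G_q$. The finite \'etale cover $\sW \to \sY$ is then given by a $G_q$-equivariant finite \'etale cover $V \to U$, with $\sW = [V/G_q]$ and $W = V/G_q$. Writing $S := V_{\tilde q}$, a finite $G_q$-set, one checks directly that
\[
\sW \times_{\sY} BG_q \;\cong\; [S/G_q],
\]
which is a trivial cover of $BG_q$ precisely when $G_q$ acts trivially on $S$. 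Thus condition~(1) translates into the statement that every $s \in S$ is a fixed point of $G_q$.

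For the direction (1) $\Rightarrow$ (2), fix $s \in S$. Because $G_q$ fixes both $\tilde q$ and $s$, and $V \to U$ is $G_q$-equivariant and \'etale at $s$, the induced map $\widehat{\sO}_{U,\tilde q} \to \widehat{\sO}_{V,s}$ on completed local rings is a $G_q$-equivariant isomorphism. Taking $G_q$-invariants yields an isomorphism $\widehat{\sO}_{Y,\bar q} \cong \widehat{\sO}_{W,\bar s}$, where $\bar s$ denotes the image of $s$ in $W$. Since every point of $W$ above $\bar q$ arises from some fixed $s \in S$, this shows $W \to Y$ is \'etale over $\bar q$.

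For the converse (2) $\Rightarrow$ (1), pick $s \in S$ with stabilizer $G_s \subseteq G_q$. The $G_s$-equivariant isomorphism $\widehat A := \widehat{\sO}_{U,\tilde q} \cong \widehat{\sO}_{V,s}$ coming from \'etaleness of $V \to U$ at $s$ lets me identify the completed local map $W \to Y$ at $\bar s \to \bar q$ with the inclusion $\widehat A^{G_q} \hookrightarrow \widehat A^{G_s}$, and the hypothesis (2) makes this inclusion \'etale at the closed point. The main obstacle---and the only substantive step---is then a local invariant-theoretic assertion: if a finite group $G$ acts faithfully on a regular local ring $\widehat A$ and $H \subseteq G$, then $\widehat A^G \hookrightarrow \widehat A^H$ is \'etale at the closed point only when $H = G$. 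I would establish this by comparing inertia in the tower $\Spec \widehat A \to \Spec \widehat A^H \to \Spec \widehat A^G$: the composite has inertia $G$ at the closed point (since $G$ acts faithfully and fixes the closed point), whereas the \'etaleness of the second arrow combined with the inertia $H$ of the first would force the composite inertia to equal $H$, yielding $H = G$. Applying this with $H = G_s$ and $G = G_q$ gives $G_s = G_q$, so every $s \in S$ is fixed by $G_q$, which is precisely (1).
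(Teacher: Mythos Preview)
Your argument is correct and takes a genuinely different route from the paper's proof. Both proofs begin with the same \'etale-local reduction to a global quotient $[V/G_q]$ and the same translation of condition~(1) into ``$G_q$ acts trivially on the fibre $S$ over $\tilde q$''. After that the arguments diverge. For $(2)\Rightarrow(1)$ the paper uses a pure counting argument: the number of points of $W$ over $\bar q$ equals the number of $G_q$-orbits on $S$, and \'etaleness forces this to equal $d=|S|$. Your approach instead passes to completed local rings, identifies the map $\widehat{\mathcal O}_{Y,\bar q}\to\widehat{\mathcal O}_{W,\bar s}$ with $\widehat A^{G_q}\hookrightarrow \widehat A^{G_s}$, and uses that a finite \'etale extension of strictly henselian local rings is an isomorphism together with a degree count on fraction fields to force $G_s=G_q$. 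Your inertia language is a bit informal, but the underlying argument is sound. For $(1)\Rightarrow(2)$ the paper proves an openness statement for the locus where stabilisers are preserved, shrinks to that locus, and then invokes Noohi's theorem (Theorem~\ref{noohi} and Remark~\ref{coarsenoohi}) to conclude. Your argument is more direct and self-contained: the $G_q$-equivariant isomorphism of completed local rings descends, upon taking $G_q$-invariants (and using the standard fact that completion commutes with finite-group invariants for module-finite extensions), to an isomorphism on completed local rings of the coarse spaces. The trade-off is that the paper's proof of $(2)\Rightarrow(1)$ is shorter and more transparent, while your proof of $(1)\Rightarrow(2)$ avoids the appeal to Noohi's theorem, which is a substantial external input.
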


\begin{proof}
Without loss of generality, we first replace $k$ by a finite \'etale extension, and
assume that the finite group scheme $G_q$ is actually a discrete group. Hence $G_q$
is defined over $k$. The lemma may be proved by \'etale base change
on $Y$, and hence by Lemma \ref{refinedav} we may assume that
$\sY \,=\, [V/G_q]$ for some smooth variety $V/k$ with a $k_q$-point
$\widetilde{q}$ and an action of $G_q$ which fixes $\widetilde{q}$. The pullback of $\sW$
to $V$ gives
a $G_q$ equivariant \'etale cover $f\,:\,U\,\longrightarrow\, V$ such that $\sW\,=\,
[U/G_q]$. Let $\{p_1,\cdots ,p_d\}\,=\, \sW\times_{\sY}\Spec(k_q)$ denote the set of $k_q$
points of $U$ lying over $\widetilde{q}$. Note that this is a finite set and moreover $d$ is
exactly equal to the degree of the map $\sW\,\longrightarrow\, \sY$.

Now we claim that the following diagram is cartesian 
$$\xymatrix{
\left(\coprod_{i=1}^d p_i \right)/G_q \ar[d]\ar[r]^{\hspace{15mm} \coprod p_i} & \sW \ar[d] \\
BG_q  \ar[r] & \sY
}$$
This claim follows from the fact that to check that above diagram is cartesian
one further base change by the map $\Spec(k_q)\,\longrightarrow\, BG_q$.
After this, the claim follows from the definition of the set $\{p_1,\cdots ,p_d\}$.

Note that the left vertical arrow in the above diagram is a trivial \'etale cover if
and only if the action of $G_q$ on $\{p_1,\cdots ,p_d\}$ is trivial. More generally, we
observe that the number of points in $W$ lying over the point $q$ of $Y$ is precisely
equal to the set of orbits of $\{p_1,\cdots ,p_d\}$ for $G_q$ action.\\

\noindent $(2)\implies (1)$:\, If $W\,\longrightarrow\,  Y$ is \'etale over $q$,
then the number of elements lying over $q$ must match the generic degree (since it is
also finite). Thus the number of orbits of $\{p_1,\cdots ,p_d\}$ for $G_q$ action must be
exactly $d$ which is equivalent to saying that the action of $G_q$
on $\{p_1,\cdots ,p_d\}$ is trivial. \\

\noindent $(1)\implies (2)$:\, For any point $x\, \in\, U$, let $G_x$ be the
stabilizer of $x$ for the action of $G_q$. Let $S$ be the set of points $x\,\in\, U$ such
that $G_x\,=\, G_{f(x)}$, where $f$ denotes the map $U\,\longrightarrow\, V$.

We claim that $S$ is an open subset of $U$. To see this, consider the following commutative
diagram 
$$\xymatrix{
U\times G_q \ar[r]^\phi \ar[d]_{a=f\times id} &  U\times U \ar[d]^{b=f\times f}\\
V\times G_q\ar[r]^\psi & V\times V
}$$
where $\phi$, and similarly $\psi$, is the map $(u\, ,g)\,\longmapsto\, (u\, ,gu)$. If
$$\eta\,:\,U\times G_q\,\longrightarrow\, U$$ is the projection, then the set $S$ is
precisely the complement
$\eta \left( a^{-1}\psi^{-1}(\Delta_V) \backslash  \phi^{-1}(\Delta_U)\right)$, where
$\Delta_V$ and $\Delta_U$ are the diagonals.
Since $\eta$ is a closed map, and the diagonals $\Delta_V$ and $\Delta_U$ are closed, to prove
the claim it suffices to show that 
$\phi^{-1}(\Delta_U)$ is open in $a^{-1}\psi^{-1}(\Delta_V)\,=\, \phi^{-1}b^{-1}(\Delta_V)$.
Since $U\,\longrightarrow\, V$ is finite \'etale, the openness of $\phi^{-1}(\Delta_U)$
follows from the fact that $\Delta_U$ is a connected component of $b^{-1}(\Delta_V)$.
This proves the claim that $S$ is open.

Let $T\,:=\,V\backslash f(U\backslash S)$. This $T$ is an open neighborhood of $q$ having
the property that for every point $y'\,\in\, U$ lying over a point $y$ in $T$,
we have $G_y\,=\,G_{y'}$.

Now, in order to prove that $(1)\implies (2)$, we may replace $V$ by $T$, a $G_q$-invariant 
Zariski neighborhood of $q$. Thus, without loss of generality, we now assume that all 
points $y\,\in\, V$ have the property that for any point $y'\,\in\, U$ with $f(y')\,=\,y$, 
we have $G_y\,=\,G_{f(y')}$. In other words, for every point $y$ of the stack $\sY$, the action of 
$G_y$ on the fiber of $\sW\,\longrightarrow\, \sY$ is trivial. By Theorem \ref{noohi}
and Remark  \ref{coarsenoohi}, the map $W\,\longrightarrow\, Y$ is \'etale.
\end{proof}

\begin{remark}\label{onlypurity1}
We note that the only reason why one requires $\widetilde{X}$ to be
smooth in Lemma \ref{smoothnoohi} is to ensure purity, meaning any
finite \'etale cover of an open subset of $X$, whose complement has codimension
at least two,  extends to a finite \'etale cover of $\widetilde{X}$. Thus the
statement of Lemma \ref{smoothnoohi} remains true if  
$\widetilde{X}$ is normal and satisfies purity. 
\end{remark}

\section{Group actions and Blow-ups}\label{se3}

The goal of this section is to prove Theorem \ref{quotient}. 
The proof uses the following observation on the tangent spaces to the
points of a blow-up.

\begin{prop}\label{blowup}
Let $k$ be an algebraically closed field, and let $C$ be
a cyclic group of prime power order acting faithfully on a smooth variety $X/k$. Assume
that there exists a closed point $p\,\in\, X$ fixed by $C$. Then one can find a
$C$--equivariant proper birational morphism $ \pi\,:\,Y\,\longrightarrow\, X$ and a
smooth closed point $\widetilde{p}\,\in\, Y$, such that the action of $C$
on the tangent space $T_{\widetilde{p}}Y$ is multiplication by a character of $C$.
\end{prop}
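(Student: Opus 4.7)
The strategy is to construct $Y$ as an iterated sequence of $C$--equivariant blow-ups of $X$, tracking at each stage a chosen $C$-fixed point together with the multiset of characters appearing in its tangent representation, with the aim of reaching a fixed point where all tangent characters coincide (which is the same as saying $C$ acts on the tangent space by a single character).

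The basic move is to blow up the chosen $C$-fixed point $p$. One obtains $X_1 = \mathrm{Bl}_p X$ whose exceptional divisor $E_1 \cong \mathbb{P}(T_p X)$ inherits a $C$-action from the linear $C$-action on $T_p X$. Since $C$ is abelian and acts on the projective variety $E_1$, there are $C$-fixed points on $E_1$. Assuming first that $\mathrm{char}(k) \nmid |C|$, one decomposes $T_p X = \bigoplus V_i$ into character eigenspaces for distinct characters $\chi_1, \ldots, \chi_m \in \widehat{C}$ with $d_i = \dim V_i$, so that the $C$-fixed locus on $E_1$ is $\bigsqcup_i \mathbb{P}(V_i)$. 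At a fixed point $\tilde p \in \mathbb{P}(V_i)$ representing a line $L \subset V_i$, the standard formula for the tangent space of a blow-up gives
$$T_{\tilde p} X_1 \;=\; L \oplus \mathrm{Hom}(L, T_pX/L),$$
where the normal summand $L$ carries character $\chi_i$, the subspace $\mathrm{Hom}(L, V_i/L)$ contributes $(d_i - 1)$ copies of the trivial character, and for $j \neq i$ the summand $\mathrm{Hom}(L, V_j)$ contributes $d_j$ copies of the character $\chi_i^{-1}\chi_j$. Thus, at the level of the tangent-character multiset, this move sends $\{\chi_j^{(d_j)} : 1\le j \le m\}$ to $\{\chi_i\} \cup \{1^{(d_i-1)}\} \cup \{(\chi_i^{-1}\chi_j)^{(d_j)}: j \neq i\}$.

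Identifying $\widehat{C}$ with $\mathbb{Z}/\ell^a$ via a fixed generator and writing $\chi_j$ as $k_j \in \mathbb{Z}/\ell^a$, the above becomes a Euclidean-algorithm-style subtraction move on the multiset of weights $\{k_j\}$. The main obstacle, and the combinatorial heart of the argument, is termination: one must show that by a sequence of such moves (combined, when necessary, with blow-ups of other $C$-invariant smooth subvarieties such as the $C$-fixed locus of $X$, which contributes useful trivial-character directions when the trivial eigenspace has positive dimension) one can always decrease a suitable complexity measure on the weight multiset until it becomes constant. A workable measure is a lexicographic pair consisting of the maximum $\ell$-adic order of a $k_j$ and the number of distinct $k_j$'s of that maximal order; the prime-power structure of $|C|$ and the Euclidean-algorithm structure of subtraction in $\mathbb{Z}/\ell^a$ are what force this measure eventually to drop under an appropriate choice of $i$. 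The case $\mathrm{char}(k) = \ell$ dividing $|C|$ runs in parallel, using the Jordan decomposition of a generator $\sigma\in C$ on $T_pX$ (whose only eigenvalue is then $1$) in place of the character decomposition; there the target ``multiplication by a character'' is forced to be the trivial action on the tangent space, and $\sigma$-invariant lines still form a non-empty projective space $\mathbb{P}(\ker(\sigma-1))$ providing fixed points to iterate on.
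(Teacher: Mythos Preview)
Your setup is correct—the proof does proceed by iterated $C$-equivariant blow-ups while tracking the tangent representation—but the termination argument, which you rightly identify as the heart of the matter, is not actually supplied. You only assert that a lexicographic measure works; in fact, point blow-ups alone do \emph{not} suffice in the tame case. Take $|C|=4$ with tangent weights $\{1,2,3\}$ in $\mathbb{Z}/4\mathbb{Z}$, each of multiplicity one: a direct check shows that the weight-multisets reachable by iterated point blow-ups form the closed set
\[
\bigl\{\{1,2,3\},\ \{1,1,2\},\ \{2,3,3\},\ \{0,1,1\},\ \{0,3,3\},\ \{0,1,3\}\bigr\},
\]
none of which is constant. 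Your auxiliary move (blowing up the $C$-fixed locus) only becomes available once a zero weight has appeared, and you give no proof that the enlarged system terminates; your proposed measure does not visibly decrease along such sequences.

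The paper's decisive idea in the tame case is to blow up not at the point $p$ but along a smooth $C$-invariant subvariety through $p$ whose tangent space is a \emph{hyperplane} $V'_\chi\subset V_\chi$ inside the eigenspace of a \emph{primitive} character $\chi$; such a subvariety exists because $C$ is linearly reductive (Lemma~\ref{lem:subvariety}). Taking the point on the exceptional divisor in the normal direction $\ell\subset V_\chi$, the tangent formula (Lemma~\ref{lem:tangent}) yields $V_\chi\oplus \ell^*\otimes\bigl(\bigoplus_{i\ge 2}V_{\chi^{a_i}}\bigr)$: the entire $V_\chi$ summand is preserved intact while every other weight is shifted by $-1$. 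Iterating, after at most $|C|$ steps some $a_i$ becomes $1\pmod{|C|}$, the number of distinct characters strictly drops, and induction finishes cleanly. In the unipotent case your outline is closer to the paper's, but one must choose the $\sigma$-fixed line $\ell$ \emph{outside} the maximal trivial direct summand $V_0$ (equivalently, inside $\operatorname{im}(\sigma-1)$); otherwise the blow-up leaves the isomorphism type of the tangent representation unchanged and the induction stalls.
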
 

\begin{proof}
\underline{Case (1): $|C|$ is a power of $Char(k)$.} In this case,
the group $C$ is unipotent. Let $V_0\,\subset\, T_p X$ be a maximal direct summand of
the $C$--module $T_p X$ on which $C$ acts trivially. Thus we have a decomposition 
$$ T_p X\,=\, V_0\oplus V'\, .$$ 
Note that $V_0$ could be zero.

Using induction, we will show the following: by
performing $C$--equivariant blow-ups at suitable closed points, $\dim_k V_0$
can be increased until it coincides with the dimension of $X$.

If $V'\,=\,0$, then there is nothing to prove.
Else, we can find a line $\ell\,\subset\, V'$ on which the action of $C$ is trivial.
Let $\widetilde{X}\,\longrightarrow\, X$ be the blow up of $X$ at $x_0$, and let
$\widetilde{p}\,\in\, \widetilde{X}$ be the point above $p$ corresponding to the line $\ell$. The
tangent space to $\widetilde{p}$ is canonically isomorphic to
$$ T_{\widetilde{p}}\widetilde{X} \,= \,\ell \oplus (\ell^*\tensor (T_pX/\ell))\, ,$$
where $\ell^*$ is the dual line. The group $C$ acts trivially on $\ell$ and hence
also on its dual $\ell^*$. Therefore, we have a $C$--equivariant isomorphism 
$$ T_{\widetilde{p}}\widetilde{X} \,\cong\, \ell \oplus V_0\oplus (V'/\ell) \, .$$
Note that $C$ acts trivially on the above direct summand $\ell\oplus V_0$.
Consequently, we have increased the dimension of the direct summand on
which $C$ acts trivially, and now the proof of the
claim is finished.\\

\noindent
\underline{Case(2): $|C|$ is coprime to $Char(k)$.} In this case, the group $C$
is linearly reductive, meaning every sub-representation of $C$ is a direct
summand. Let $\chi$ be a primitive character (i.e., a generator of the character group) of $C$.
For any integer $a$, let $$V_{\chi^a}\,\subset\, T_pX$$ be the isotypical component
on which $C$ acts as multiplication through $\chi^a$. There are finitely many
integers $\{a_i\}_{i=1}^r$ such that
$$
T_pX \,=\, \bigoplus_{i=1}^r V_{\chi^{a_i}}\, .
$$
Since the action of $C$ on $T_pX$ is faithful, at least one $a_i$ (say $a_1$)
is coprime to $|C|$. As $|C|$ is a prime power, we conclude that
$\chi^{a_1}$ is also a primitive character. By replacing $\chi$ by $\chi^{a_1}$, we may
assume that $a_1\,=\,1$.

If $r=1$, then there is nothing to prove. We may therefore assume that $r\,\geq\,2$.

Let $\ell \,\subset\, V_\chi\,=\, V_{\chi^1}$ be a line, and write
$$V_{\chi}\,=\,\ell \oplus V_\chi'\, .$$ By Lemma \ref{lem:subvariety}, there
exists a closed subvariety $Y\,\subset\, X$ containing $p$, and smooth at
$p$, such that $T_pY\,=\, V'_{\chi}$. Let 
$$\pi_1\,:\,X_1\,\longrightarrow\, X$$ be the blow up of $X$ along $Y$. Let
$p_1\,\in\, X_1$ be the point lying above $p$ corresponding to the normal direction
$\ell$. By Lemma \ref{lem:tangent}, we have
$$
T_{p_1}X_1 \,=\, V_\chi \oplus \ell^* \tensor \left(\bigoplus_{i=2}^r
V_{\chi^{a_i}}\right)\, .
$$
Since $V_{\chi}$ and hence $\ell$ are naturally subspaces of $T_{p_1}X_1$, we
can inductively define $X_m$, $m\, \geq\, 1$, as the blow-up of
$X_{m-1}$ along the subvariety through $p_{m-1}$ defined by $V_{\chi}'\subset T_{p_{m-1}}X_{m-1}$ (see Lemma \ref{lem:tangent}) with $p_m$
being the point lying above $p_{m-1}$ corresponding to the normal direction $\ell$. 
Thus the tangent space at $p_m$ decomposes as
$$ T_{p_m}X_m \,=\, V_{\chi}\oplus \left( \bigoplus_{i=2}^r
V_{\chi^{a_i-m}}\right)\, .$$
Therefore, if $m$ is an integer such that $$a_2-m\,\equiv\, 1 \ {\rm mod} \ |C|\, ,$$
we see that the number of characters of $C$ which have non-zero eigenspaces
in $T_{p_m}X_m$ is strictly less than $r$. Now, by induction, we may replace
$X$ with a higher birational model and assume that $r\,=\,1$. This completes
the proof of the proposition.
\end{proof}

\begin{lemma}\label{lem:subvariety}
Continuing with the notation of Proposition \ref{blowup}, assume that $Char(k)$
does not divide $|C|$. Then given any $C$--invariant subspace 
$W\,\subset\, T_p(X)$, there exists a $C$--invariant closed subvariety $Y\,
\subset\, X$, containing $p$ and smooth at $p$, such that $W\,=\,T_pY$. 
\end{lemma}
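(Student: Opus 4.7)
The plan is to build $Y$ as the common vanishing locus of $d:=\dim X-\dim_{k}W$ regular functions near $p$ that are simultaneously $C$-semi-invariant and whose differentials at $p$ form a basis of the annihilator $W^{\perp}\subset T_{p}^{*}X=\mathfrak{m}/\mathfrak{m}^{2}$, where $\mathfrak{m}\subset\mathcal{O}_{X,p}$ denotes the maximal ideal. Because $|C|$ is invertible in $k$, the group $C$ is linearly reductive, so both $T_{p}^{*}X$ and the $C$-invariant subspace $W^{\perp}$ split as direct sums of isotypical components indexed by the characters $\chi$ of $C$.

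The key step is to lift each isotypical class in $\mathfrak{m}/\mathfrak{m}^{2}$ to a genuine semi-invariant in $\mathfrak{m}$. Given $\bar g\in(\mathfrak{m}/\mathfrak{m}^{2})_{\chi}$, choose any lift $g\in\mathfrak{m}$ and define
$$f\;:=\;\frac{1}{|C|}\sum_{c\in C}\chi(c)^{-1}\,c^{*}g\;\in\;\mathfrak{m}.$$
Since $\bar g$ is a $\chi$-eigenvector, one has $c^{*}g\equiv\chi(c)\,g\pmod{\mathfrak{m}^{2}}$, so that $f\equiv g\pmod{\mathfrak{m}^{2}}$; a reindexing of the sum shows $(c')^{*}f=\chi(c')f$ for every $c'\in C$. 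Thus $f$ is a bona fide $\chi$-semi-invariant lifting $\bar g$. This averaging step is the one place where the hypothesis $\mathrm{char}(k)\nmid|C|$ is used essentially, since one must invert $|C|$. Applying the construction to bases of each $(T_{p}^{*}X)_{\chi}$ whose first elements form a basis of $(W^{\perp})_{\chi}$, one produces $f_{1},\dots,f_{n}\in\mathfrak{m}$ such that each $f_{i}$ is a $\chi_{i}$-semi-invariant, $(df_{1},\dots,df_{n})$ is a basis of $T_{p}^{*}X$, and $(df_{1},\dots,df_{d})$ is a basis of $W^{\perp}$.

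After shrinking $X$ to a $C$-invariant affine open neighborhood of $p$ on which the $f_{i}$ are regular, set $Y_{0}\,:=\,V(f_{1},\dots,f_{d})$. The defining ideal $(f_{1},\dots,f_{d})$ is $C$-stable because each $f_{i}$ spans a $C$-stable line, so $Y_{0}$ is $C$-invariant. Since the $f_{i}$ extend to a regular system of parameters at $p$, the subscheme $Y_{0}$ is smooth at $p$ of dimension $\dim X-d=\dim_{k}W$, and
$$T_{p}Y_{0}\;=\;\bigcap_{i=1}^{d}\ker(df_{i})\;=\;(W^{\perp})^{\perp}\;=\;W.$$
Smoothness at $p$ forces a unique irreducible component $Y$ of $Y_{0}$ through $p$; it is automatically $C$-stable because $C$ fixes $p$, and its closure in the original $X$ is the required closed subvariety. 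The only non-formal step is the semi-invariant lifting above; the remaining points---tangent-space identification, smoothness, the existence of a $C$-invariant affine neighborhood of the fixed point, and the selection of the component through $p$---are routine.
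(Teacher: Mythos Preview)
Your proof is correct and follows essentially the same strategy as the paper: lift a basis of the annihilator $W^{\perp}\subset\mathfrak m/\mathfrak m^{2}$ to $C$-compatible functions in $\mathfrak m$ and cut out $Y$ by their vanishing. The only cosmetic difference is that the paper invokes linear reductivity abstractly---it splits the $C$-equivariant surjection $A\to A/\mathfrak m_p^{2}$ to obtain a $C$-invariant subspace of $A$ mapping isomorphically onto $W_1^{*}$---whereas you make this explicit via the Reynolds averaging $f=\frac{1}{|C|}\sum_{c}\chi(c)^{-1}c^{*}g$, producing individual semi-invariants; your added step of passing to the irreducible component through $p$ is a harmless refinement the paper omits.
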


\begin{proof}
Since $C$ is linearly reductive, we can find a $C$--equivariant decomposition 
$$ T_pX \,=\, W \oplus W_1 \, .$$
This gives a $C$--equivariant decomposition of the cotangent space 
$$m_p/m_p^2\,=\,(T_pX)^*\,=\, W^*\oplus W_1^*\, ,$$ 
where $m_p$ is the maximal ideal of the local ring $\sO_{X,p}$ at $p$. In order
to prove the lemma we may assume that $X\,=\,\Spec(A)$ is an affine scheme, by
looking at a $C$--equivariant affine neighborhood of $p$.

The maximal ideal of $A$ corresponding to $p$ will also be denoted by
$m_p$; the distinction will be clear from the context. We have a surjection 
$$ A \,\longrightarrow\, A/m_p^2\, . $$
This is a $C$--equivariant homomorphism and it admits a $C$--equivariant splitting.
Therefore, if $r\,=\,{\rm codim}_{k(p)}\,W$, then there exist $f_1,\cdots ,f_r\,\in
\, A$ such that the $k$--span of $\{f_i\}_{i=1}^r$ is $C$--invariant and it maps
isomorphically onto $W_1^*$. Now let $Y$ be the closed subvariety defined by
the ideal generated by $\{f_i\}_{i=1}^r$. If $m_{p,Y}$ is the ideal of the local ring of
$p$ at $Y$, then 
$$m_{p,Y}/m_{p,Y}^2 \,=\, W^*\, .$$
Taking duals, we see that $T_pY = W$ as required. To show $Y$ is smooth at $p$, we first note that by 
Krull's principal ideal theorem, we have ${\rm codim}(Y)\,\leq\, r$. However, ${\dim}_{k(p)}(m_{p,Y}/m_{p,Y}^2)\,=\,r$. This completes the proof.
\end{proof}

\begin{lemma}\label{lem:tangent}
Let $G$ be a finite group acting on $X$ with a closed fixed point $p$. Assume
that $|G|$ is coprime to $Char(k)$. Let $Y\,\subset\, X$ be a closed, smooth,
$G$--invariant subvariety containing $p$, and let
$$\pi\,:\,\widetilde{X}\,\longrightarrow\, X$$
be the blow-up of $X$ along $Y$.
Take a line $\ell \,\subset\, T_pX$ not contained in $T_pY$. Let $\widetilde{p}
\,\in\, \widetilde{X}$
be the point above $p$ corresponding to the normal direction given
by $\ell$. Then there exists a $G$--equivariant isomorphism 
$$ T_{\widetilde{p}}\widetilde{X} \,\cong\, \left(T_pY\oplus \ell\right)\oplus
\left(\ell^*\tensor (T_pX/(T_pY+\ell))\right)\, .$$
\end{lemma}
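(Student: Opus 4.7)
The plan is to use the standard description of the tangent space at a point of the exceptional divisor of the blow-up of a smooth subvariety, upgrading all canonical isomorphisms to $G$-equivariant ones by the linear reductivity of $G$ (which applies since $|G|$ is coprime to $\mathrm{Char}(k)$).

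First I would observe that, because $\ell$ is a line not contained in $T_pY$, the quotient map $T_pX \twoheadrightarrow N_p := T_pX/T_pY$ restricts to a $G$-equivariant isomorphism $\ell \stackrel{\sim}{\longrightarrow} \bar{\ell}$, where $\bar{\ell}$ is the image of $\ell$; the point $\widetilde{p}$ then corresponds to $[\bar{\ell}] \in \P(N_p)$ on the exceptional divisor $E = \pi^{-1}(Y) \cong \P(N_{Y/X})$. Two canonical short exact sequences now determine $T_{\widetilde{p}}\widetilde{X}$: the normal bundle sequence for the smooth divisor $E \subset \widetilde{X}$,
$$ 0 \longrightarrow T_{\widetilde{p}}E \longrightarrow T_{\widetilde{p}}\widetilde{X} \longrightarrow N_{E/\widetilde{X}}|_{\widetilde{p}} \longrightarrow 0, $$
and the relative tangent sequence for the smooth $\P^{r-1}$-bundle $E \to Y$ at $\widetilde{p}$, with fiber $F \cong \P(N_p)$,
$$ 0 \longrightarrow T_{\widetilde{p}}F \longrightarrow T_{\widetilde{p}}E \longrightarrow T_pY \longrightarrow 0. $$
Standard identifications give $N_{E/\widetilde{X}}|_{\widetilde{p}} \cong \sO_E(-1)|_{\widetilde{p}} \cong \bar{\ell} \cong \ell$, and, via the Euler sequence on $\P(N_p)$, $T_{\widetilde{p}}F \cong \bar{\ell}^* \tensor (N_p/\bar{\ell}) \cong \ell^* \tensor (T_pX/(T_pY+\ell))$.

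Because $Y$ is $G$-invariant, the blow-up $\widetilde{X} \to X$ is $G$-equivariant, as are both sequences above. Linear reductivity of $G$ then splits both sequences $G$-equivariantly, giving the claimed decomposition
$$ T_{\widetilde{p}}\widetilde{X} \cong \left(T_pY \oplus \ell\right) \oplus \left(\ell^* \tensor (T_pX/(T_pY+\ell))\right). $$
The main obstacle is verifying $G$-equivariance throughout: concretely, that the isomorphisms $E \cong \P(N_{Y/X})$, $\sO_{\widetilde{X}}(E)|_E \cong \sO_E(-1)$, and the Euler-sequence identification of $T_{\widetilde{p}}F$ are all $G$-equivariant. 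Each of these is canonical—constructed functorially from the pair $(X, Y)$—so the $G$-action is inherited automatically; once this is noted the proof is essentially formal.
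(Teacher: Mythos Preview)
Your argument is correct and takes a genuinely different route from the paper's. The paper proceeds by \emph{equivariant formal linearization}: using linear reductivity of $G$ to split the surjection $A\twoheadrightarrow A/m_p^2$, it lifts a basis of $m_p/m_p^2$ to $A$ and thereby produces a $G$-equivariant formal isomorphism of the triple $(X,Y,p)$ with $(\mathbb{A}^n_k,\mathbb{A}^m_k,\underline{0})$, where $G$ acts linearly; the tangent-space computation is then carried out by hand in the explicit model $\widetilde{X}\subset\mathbb{A}^n_k\times_k\mathbb{P}^{n-m-1}_k$. You instead stay intrinsic: the two canonical short exact sequences (the normal sequence for $E\subset\widetilde{X}$ and the relative tangent sequence for $E\to Y$) together with the Euler-sequence identification of $T_{\widetilde{p}}\mathbb{P}(N_p)$ are all functorial in the pair $(X,Y)$, hence $G$-equivariant, and linear reductivity is invoked only at the very end to split them.

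Both approaches use linear reductivity in an essential way, but at different stages: the paper uses it to straighten coordinates before computing, while you use it after assembling the canonical filtration. Your method avoids coordinates entirely and makes the role of each summand transparent (the $\ell$ summand is the normal direction to $E$, the $T_pY$ summand is the horizontal direction along the center, and the $\ell^*\otimes(T_pX/(T_pY+\ell))$ summand is the tangent to the projective fiber). The paper's method, on the other hand, yields as a byproduct the equivariant formal local form of the blow-up itself, which is a slightly stronger statement than what is needed. One small point worth making explicit in either approach: for $\widetilde{p}$ to be a $G$-fixed point (so that $G$ acts on $T_{\widetilde{p}}\widetilde{X}$ at all) one needs the image $\bar{\ell}\subset N_p$ to be $G$-stable; this is implicit in the lemma's statement and is satisfied in the application in Proposition~\ref{blowup}, where $\ell$ lies in an eigenspace.
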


\begin{proof}
We observe that the assertion depends only on a formal neighborhood of $p$ in
$X$. The strategy of the proof consists of the following two steps:
\begin{itemize}
\item Show that formally locally at $p$, the triple $(X\, ,Y\, ,p)$ is $G$
equivariantly isomorphic to $(\A^n_k\, ,\A^m_k\, ,\underline{0})$, where $G$ acts
linearly on $\A^n_k$ with $\A^m_k\,\subset\, \A^n_k$ being a
$G$--invariant subspace, and

\item check the claim of the lemma by using the explicit construction
of the blow up as a closed subvariety of $\A^n_k\times_k \P^{n-m-1}_k$.
\end{itemize}
The later step is straightforward and we omit the details.

Without loss of any generality, we assume that $X\,=\,\Spec(A)$ is affine. The
maximal ideal corresponding to $p$ will be denoted by $m_p$.
Define $$V \,:=\, m_p/m_p^2\  \text{ and }\ 
W \,:=\, m_{p,Y}/m_{p,Y}^2\,\subset\, V\, ,$$ where
$m_{p,Y}$ is the maximal ideal of $p$ in the local ring of $Y$. Since $G$ is
linearly reductive, there exists a $G$--equivariant splitting of the epimorphism 
$$ A \,\longrightarrow\, A/m_p^2\, . $$
Thus we get an $n$--dimensional $k$--subspace $V_0\,\subset\, A$
that maps isomorphically onto $V$. We denote by $W_0$ this subspace of $V_0$
mapping isomorphically onto $W$. The canonical $k$--algebra homomorphism
$$ {\sS}ym^*_k(V_0) \,\longrightarrow\,  A $$
gives the formal local isomorphism with the affine space
$\Spec({\sS}ym^*(V_0))$ that we are seeking.
\end{proof}

\begin{proof}[Proof of Theorem \ref{quotient}]
In order to prove the theorem, we may replace $k$ by its algebraic closure inside the
function field of $X$, and assume that $X$, and hence $\widetilde{X}$, are geometrically connected.
Fix a separable closure $K/k$, and let $X_K$ (respectively, $\widetilde{X}_K$) denote the
base extension to $K$ of $X$ (respectively, $\widetilde{X}$). We lift the geometric point
$x_0$ to the corresponding base extended varieties. Therefore, we have a commutative diagram
with exact rows:
$$
\xymatrix{
1 \ar[r] &  \pi_1^{et}(\widetilde{X}_K,x_0)\ar[r]\ar[d] & \pi_1^{et}(\widetilde{X},x_0)\ar[r]\ar[d] & \Gal(K/k) \ar[r] \ar@{=}[d] & 1 \\
1 \ar[r] &  \pi_1^{et}(X_K,x_0)\ar[r] & \pi_1^{et}(X,x_0)\ar[r] & \Gal(K/k) \ar[r] & 1
}$$
It is now clear that to prove that the middle vertical arrow is an isomorphism, it
is enough to prove that the left-most vertical arrow is an isomorphism. Thus we may
base extend to $K$ and assume that our field is separably closed. Further, if $\overline{K}/K$ is an algebraic closure of $K$, then as $K$ is separably closed, this extension is purely inseparable. Base extension by an inseparable extension does not change the \'etale fundamental group. Hence we may base extend to the algebraic closure of $K$ and henceforth assume that $K$ is in fact an algebraically closed field.   \\

The theorem is equivalent to the statement that the pullback functor 
$$ ET(X)\,\longrightarrow\, ET(\widetilde{X})$$
(see Notation \ref{ntildebx}) is an equivalence of categories. This
question is \'etale local on $X$ and hence we may assume that $X$ is actually a
quotient of a variety $Y$ by a finite group $G$ acting faithfully on $Y$. Let
$$\sX\,=\, [Y/G]$$ be the quotient stack. Fix a point $x_0$ of $\sX$. By Theorem
\ref{noohi} and Lemma \ref{smoothnoohi}, it is enough to show that the natural
inclusion
$$ \widetilde{N}_{x_0} \,\subset\,N_{x_0} $$
(see Notation \ref{ntildebx}) is surjective. This is equivalent to the statement
that for any point $p$ of $\sX$, the images of $\phi_{q,\sX}$ (see \eqref{re2})
for all $(\sY\, ,q)\,\in\, \Sigma_{\sX}$ such that $q$ lies over $p$,
together generate the isotropy group $G_p$ at $p$. We will prove this statement
by showing that all elements of $G_p$ of prime power order are in
the image of $\phi_{q,\sX}$ for some $(\sY\, ,q)$.

Let $x\,\in\, G_p$ be an element of prime power order, and
let $C$ be the cyclic subgroup of $G_p$ generated by $x$. It suffices to
find a $(\sY\, ,q)\,\in\,\Sigma_{\sX}$ (see Notation \ref{ntildebx}) such
that image of the induced homomorphism on
isotropy groups $$ G_{q}\,\longrightarrow\, G_{p}$$ is $C$. 
By Lemma \ref{lem:tangent}, there exists a $C$--equivariant proper birational
morphism $\pi\,:\,\widetilde{Y}\,\longrightarrow\, Y$ and a point
$\widetilde{p}\,\in\, \widetilde{Y}$ with $\pi(\widetilde{p})\,=\, p$, such that
the action of $C$ on the tangent space $T_{\widetilde{p}}\widetilde{Y}$ is
multiplication by a character.

Let $Z\,\longrightarrow\, \widetilde{Y}$ be the blow-up at
$\widetilde{p}$, and let $q$ denote the generic point of the exceptional
divisor for this blow-up of $\widetilde{Y}$.
Then clearly $C$ acts trivially on this exceptional
divisor, and hence the isotropy group of $q$ is $C$. For every $g\,\in\, G$,
define $g^*Z$ by the following Cartesian diagram
$$ \xymatrix{
g^*Z \ar[r] \ar[d] & Z \ar[d] \\
Y \ar[r]^g & Y
}
$$
Each $g^*Z$ is a $Y$--scheme. Consider the product $$P\,=\,\prod_{g\in G}g^*Z$$ in the
category of $Y$-schemes, i.e., it is a fiber product over $Y$. 

We have a natural action of the group $G$ on $P$ which permutes
the factors. Since each map $g^*Z\,\longrightarrow\,Y$ is proper and
birational, there exists a unique dominant irreducible component of $P$. We
denote the normalization of this component by $T$. By construction, $\pi_T:T\,
\longrightarrow\, Y$ is a $G$--equivariant proper birational morphism. \\

Let $f\,:\,T\,\longrightarrow\, Z$ be the restriction of
the natural projection map 
$$P \,\longrightarrow\, e^*Z \,=\, Z\, ,$$
where $e\,\in\, G$ is the identity element.
We claim that $f$ is $C$--equivariant.

To prove this claim, we first observe that there
exists a $G$--invariant open subset $U$ of $Y$ over which  both
$$
\pi_T\,:\,T
\,\longrightarrow\, Y ~\ \text{ and }~\ \pi_Z\,:\,Z\,\longrightarrow\, Y
$$
are isomorphisms. Clearly, the restriction of $f\,:\,T\,\longrightarrow\, Z$ to $U$ is
$C$--equivariant. But $T$ is a separated integral variety, and $U$ is dense in $T$.
Consequently, $f$ is also $C$-equivariant, proving the claim. \\

Since $Z$ is normal, the morphism $f$ is an isomorphism over all codimension one
points of $Z$, in particular over $q$. Recall that $q$ is the codimension one point of
$Z$ whose isotropy group is whole of $C$. Hence there exists a unique codimension one
point $\widetilde{q}$ of $T$ lying over $q$. Since $T\,\longrightarrow\, Z$ is
$C$--equivariant, we conclude that $C$ is contained in the isotropy group of $q$. This
completes the proof of Theorem \ref{quotient}.
\end{proof}

\begin{remark}\label{onlypurity2}
In view of Remark \ref{onlypurity1}, the statement of Theorem \ref{quotient}
continues to hold for all normal varieties $\widetilde{X}$ which satisfy purity. 
\end{remark}

Below we mention Koll\'ar's observation which handles the general case of quotient of a 
smooth variety by a finite group scheme.  The argument to deduce the theorem below from 
Theorem \ref{quotient} was outlined to us by Koll\'ar in \cite{ko2}. The proof given 
below is based on a significant simplification of this argument given by the referee.

\begin{thm}[\cite{ko2}]\label{generalcase}
Let $k$ be any field, and let $G/k$ be a finite group scheme acting faithfully on a smooth
variety $Y$. If $X$ is the geometric quotient of $Y$ by $G$, and $f\,:\,
\widetilde{X}\,\longrightarrow\, X$ is any proper birational morphism with
$\widetilde{X}$ smooth, then the homomorphism
$$
\pi_1^{et}(\widetilde{X},x) \,\longrightarrow\, \pi_1^{et}(X,f(x))
$$
is an isomorphism for any geometric point $x$ of $\widetilde{X}$.  
\end{thm}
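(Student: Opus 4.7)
My plan is to deduce Theorem \ref{generalcase} from Theorem \ref{quotient} by separating the \'etale and infinitesimal parts of $G$ and absorbing the latter through a universal homeomorphism. As in the proof of Theorem \ref{quotient}, the standard field-extension argument reduces us to the case where $k$ is algebraically closed. Next, consider the connected--\'etale short exact sequence
$$ 1\,\longrightarrow\, G^0\,\longrightarrow\, G\,\longrightarrow\, G^{et}\,\longrightarrow\, 1 $$
of the finite group scheme $G$, where $G^0$ is infinitesimal and $G^{et}$ is \'etale, hence a constant finite group $H$ over $k$. Form the intermediate quotient $Z\,:=\,Y/G^0$: this is a normal variety carrying an action of $H$ whose geometric quotient is $X$, so the map $Y\,\longrightarrow\, X$ factors as $Y\,\stackrel{q}{\longrightarrow}\, Z\,\longrightarrow\, Z/H\,=\,X$.

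The key observation is that $q\,:\,Y\,\longrightarrow\, Z$ is a universal homeomorphism, since a finite quotient by an infinitesimal group scheme is bijective on underlying topological spaces and on residue fields, universally. By the topological invariance of the \'etale site (\cite{sga1}, Expos\'e IX), $q^*$ induces an equivalence $ET(Z)\,\simeq\, ET(Y)$; in particular $\pi_1^{et}(Z)\,\cong\,\pi_1^{et}(Y)$, and purity of the branch locus transfers from the smooth $Y$ to $Z$: any finite \'etale cover of a Zariski open $U\,\subset\, Z$ whose complement has codimension at least two extends uniquely to $Z$ (pull back through $q$, extend by purity on the smooth $Y$, and descend).

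We are thereby reduced to proving the analogue of Theorem \ref{quotient} for the presentation $X\,=\,Z/H$, in which $Z$ is a normal variety satisfying purity and $H$ is an ordinary finite group. An inspection of the proof of Theorem \ref{quotient} shows that smoothness of the upstairs variety enters only through purity---paralleling the observation of Remarks \ref{onlypurity1} and \ref{onlypurity2} for the downstairs resolution $\widetilde{X}$---so Noohi's theorem and Lemma \ref{smoothnoohi} continue to apply to the DM stack $[Z/H]$ in this weakened setting. The equivariant blow-ups of Proposition \ref{blowup}, needed to produce codim-one points with prescribed cyclic isotropy, are carried out on the smooth $Y$ upstairs, where the preimage in $G$ of a cyclic subgroup $C\,\subset\, H$ acts, and the resulting modifications are descended through $q$ to yield the required proper birational modifications of $Z$. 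The main obstacle I anticipate is this final descent step: one must track the $G^0$-action through the blow-ups of $Y$ and verify that the resulting codim-one points of the modification of $Z$ have isotropy containing $C$. The universal homeomorphism property of $q$ ensures the descent is benign at the level of \'etale covers, which is ultimately all that Theorem \ref{quotient}'s argument needs.
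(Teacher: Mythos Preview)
Your overall strategy --- split off the infinitesimal part of $G$ and absorb it via a universal homeomorphism --- is exactly the paper's, but you factor the quotient in the wrong order, and this is what creates the obstacle you flag at the end.

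You form $Z = Y/G^0$ and then $X = Z/H$. But $Z$ need not be smooth (e.g.\ $\mu_2$ acting on $\A^2_k$ with weights $(1,1)$ in characteristic $2$ gives a quadric cone), so $X$ does \emph{not} meet the hypothesis of Theorem~\ref{quotient}. Your assertion that ``smoothness of the upstairs variety enters only through purity'' is not accurate: Proposition~\ref{blowup} uses smoothness essentially, in the tangent--space decomposition and in the iterated blow--ups at smooth centers. Your proposed workaround, doing the blow--ups on the smooth $Y$ and descending to $Z$, runs into the problem that those blow--ups are only $C$--equivariant and have no reason to be $G^0$--equivariant (infinitesimal stabilizers at a closed point can be nontrivial scheme--theoretically), so there is no evident map to a modification of $Z = Y/G^0$. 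This descent step is a real gap, not a technicality.

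The paper sidesteps this entirely by reversing the order of the two quotients. Over a perfect (or algebraically closed) field the connected--\'etale sequence splits via $G_{\rm red}\cong G^{et}=H$, so one may form $X' := Y/G_{\rm red}$. Now $X'$ is \'etale--locally a quotient of the \emph{smooth} $Y$ by the ordinary finite group $G_{\rm red}$, hence Theorem~\ref{quotient} applies to it as a black box. The residual quotient $X'\to X$ is the universal homeomorphism (not $Y\to Z$), and one pulls it back to $\widetilde{X}$ by taking $\widetilde{X}'$ to be the normalization of $\widetilde{X}$ in $k(X')$; then $\widetilde{X}'$ is normal and inherits purity from the smooth $\widetilde{X}$, so Remark~\ref{onlypurity2} is the only extension of Theorem~\ref{quotient} needed. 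No blow--ups have to be descended anywhere. In short: take the quotient by the \'etale part first and push the universal homeomorphism down to the level of $X$, rather than taking the infinitesimal quotient first and losing smoothness upstairs.
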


\begin{proof}
Without any loss of generality we may assume that $k$ is perfect (or even algebraically
closed) by base extending to the algebraic closure of $k$. Thus the reduced subscheme
$G_{\rm red}\,\subset\, G$ is a subgroup. 

Consider the quotient stack $\sX\,:=\,[Y/G_{red}]$. Let $\sX\,\longrightarrow\, X'$
be its coarse moduli space. Take $\widetilde{X}$ as in the theorem.
Let $\widetilde{X}'$ be the normalization of  $\widetilde{X}$ in the function
field of $X'$. We thus have the following commutative diagram where horizontal arrows
are proper birational and vertical arrows are universal homeomorphisms.
$$\xymatrix{
\widetilde{X}' \ar[r]^\tau \ar[d] & X' \ar[d] \\
\widetilde{X} \ar[r] & X
}$$
Thus in order to prove the theorem, it is enough to show that the induced homomorphism
$$
\tau_* \, :\, \pi_1^{et}(\widetilde{X}', z)\,\longrightarrow\, \pi_1^{et}(X', \tau(z))\, .
$$
We note that $\widetilde{X}'$ is a normal variety satisfying purity
because $\widetilde{X}$ is smooth (see Remark \ref{onlypurity1}). Now from
Theorem \ref{quotient} and Remark \ref{onlypurity2} it follows that $\tau_*$ is
an isomorphism.
\end{proof}

\section{Fundamental group of the symmetric product}\label{se4}

Let $S^n$ be the group of permutations of $\{1\, ,\cdots \, ,n\}$. The following lemma will be used to prove Theorem \ref{sym}.

\begin{lemma}
Let $G$ be any group, and $G^n \,:=\, \prod_{i=1}^nG$. For the natural action
on $G^n$ of $S^n$, consider the semi-direct product $G^n\rtimes S^n$. Let $N$ be the normal subgroup of
$G^n\rtimes S^n$ generated by the image of $S^n$ for the natural injection
$S^n\,\longrightarrow\, G^n\rtimes S^n$. Then the quotient 
$(G^n\rtimes S^n)/N$ is canonically isomorphic to $G^{ab}\,:=\, G/[G\, ,G]$, the
abelianization of $G$.
\end{lemma}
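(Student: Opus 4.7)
The plan is to build the isomorphism by writing down an explicit homomorphism in each direction and verifying they are mutually inverse. Throughout, $\overline{\cdot}$ denotes the image in $G^{ab}$.

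For the forward direction, define
$$
\phi\,:\, G^n\rtimes S^n \,\longrightarrow\, G^{ab}\, ,\qquad ((g_1,\ldots ,g_n)\, ,\sigma)\,\longmapsto\, \overline{g_1 g_2\cdots g_n}\, .
$$
To check that $\phi$ is a homomorphism, one computes $\phi((g,\sigma)(h,\tau)) = \overline{\prod_i g_i\, \sigma(h)_i}$; because we are already in the abelian group $G^{ab}$, both the $\sigma$--twist and the order of the factors become irrelevant, and this collapses to $\overline{\prod_i g_i}\cdot\overline{\prod_j h_j} = \phi(g,\sigma)\,\phi(h,\tau)$. Since $\phi$ manifestly kills every $(e,\sigma)$, it kills the normal closure $N$ of $S^n$, and hence descends to $\bar\phi\,:\,(G^n\rtimes S^n)/N\,\longrightarrow\, G^{ab}$. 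This $\bar\phi$ is surjective because $\overline{g} = \bar\phi([((g,e,\ldots ,e),1)])$ for every $g\in G$.

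For the inverse, set $\psi(g):=[((g,e,\ldots ,e),1)]$ in $(G^n\rtimes S^n)/N$ for $g\in G$; this is trivially a group homomorphism $G\,\longrightarrow\,(G^n\rtimes S^n)/N$. The crucial observation is that the image of $\psi$ is already abelian in the quotient. Indeed, since $(e,\sigma)$ lies in $N$ and $N$ is normal, the identity $[(g,e)] = [(\sigma(g),e)]$ holds in the quotient for every $g\in G^n$ and every $\sigma\in S^n$; this lets me freely relocate any single letter to any slot. A short calculation then yields $\psi(g)\psi(h) = [((g,h,e,\ldots ,e),1)]$ and $\psi(h)\psi(g) = [((h,g,e,\ldots ,e),1)]$ modulo $N$, which become equal via the transposition $(1,2)$. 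Hence $\psi$ factors through a canonical homomorphism $\tilde\psi\,:\, G^{ab}\,\longrightarrow\,(G^n\rtimes S^n)/N$.

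Finally, $\bar\phi\circ\tilde\psi=\mathrm{id}_{G^{ab}}$ is immediate. For $\tilde\psi\circ\bar\phi=\mathrm{id}$, given $[((g_1,\ldots ,g_n),\sigma)]$ one first drops $\sigma$ (since $(e,\sigma)\in N$) and then, using the coordinate-relocation congruence, rewrites the result as $\prod_i [((g_i,e,\ldots ,e),1)] = [((g_1\cdots g_n,e,\ldots ,e),1)] = \tilde\psi(\overline{g_1\cdots g_n})$. The only subtlety worth attending to is careful bookkeeping with the semi-direct product multiplication; once one exploits the abelianness of $G^{ab}$ to ignore both the ordering and the $\sigma$-twist, the rest is formal, so no serious obstacle is expected.
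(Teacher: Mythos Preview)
Your proof is correct and follows essentially the same route as the paper: both define the identical homomorphism $\phi\colon G^n\rtimes S^n\to G^{ab}$ and both hinge on the same coordinate-relocation congruence $[(g,1)]=[(\sigma(g),1)]$ modulo $N$. The only cosmetic difference is that the paper verifies $\ker\phi=N$ directly, whereas you package the same computation as the construction of an explicit inverse $\tilde\psi$; the underlying manipulations are identical.
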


\begin{proof}
The group operation of $G^n\rtimes S^n$ is given by 
$$ (g_1,\cdots ,g_n,\sigma)\cdot (g_1',\cdots ,g_n',\sigma')\,= \,
(g_1 g_{\sigma(1)},\cdots ,g_n g_{\sigma(n)},\sigma \sigma')\, .$$
We have the following relation modulo $N$:
$$ (g_1,\cdots ,g_n,\gamma)\,=\, (g_{\sigma(1)},\cdots ,g_{\sigma(n)},\gamma)
 \ \ \ {\rm mod} \ N \ , \forall \ \sigma,\gamma \in S^n\, .$$
Thus 
\begin{equation}\label{e2}
(g_1,\cdots ,g_n,\sigma) \,= \,\prod_{i=1}^n (1,\cdots ,g_i,\cdots ,1,\sigma)
\,=\, \prod_{i=1}^n(g_i,1,\cdots ,1,\sigma)
\end{equation}
$$
\,=\, (g_1\cdots g_n,1\, ,\cdots ,1,\sigma)\, .
$$
Also, since $$(g,1,\cdots ,1,\sigma)\cdot (h,1,\cdots ,\sigma)
\,=\, (gh,1,\cdots ,1,\sigma)\,=\, (hg,1,\cdots ,1,\sigma)\, ,$$
one can deduce that
\begin{equation}\label{e1}
(g,1,\cdots ,1\, ,\sigma) \,=\, 1 \ {\rm mod} \ N
\end{equation}
if $g$ belongs to the commutator subgroup $[G,G]\,\subset\, G$. 
Define the homomorphism
$$
\phi\,:\,G^n\rtimes S^n \,\longrightarrow\, G^{ab}\, ,
(g_1,\cdots ,g_n\, ,\sigma)\,\longmapsto\, \overline{g_1}\cdots \overline{g_n}\, .
$$
Clearly $N\,\subset\, {\rm kernel}(\phi)$.

We claim that $N\,=\, {\rm kernel}(\phi)$. To prove this claim,
take any $(g_1,\cdots ,g_n,\sigma)\,\in\, {\rm kernel}(\phi)$. 
By the definition of $\phi$,
$$
\prod_{i=1}^ng_i\,\in\, [G,G]\, .
$$
{}From \eqref{e2} and \eqref{e1},
$$ (g_1,\cdots ,g_n,\sigma) \,=\,
(\prod_{i=1}^ng_i, 1,\cdots ,1,\sigma) \,=\, 1 \ {\rm mod} \ N\, .$$
This proves the claim. The claim completes the proof of the lemma.
\end{proof}

Henceforth, we assume that the field $k$ is algebraically closed.

\begin{lemma}\label{sn}
Let $X$ be a a proper, integral $k$ variety.  
Let $\sX \,:=\, [X^n/S^n]$ for the natural permutation action of $S^n$ on $X^n$. Let $x$ be a geometric point of 
$\sX$ of the form $$ x \,=\, (x_0,\cdots ,x_0)\, ,$$ where $x_0$ is a geometric point of $X$. Then
$\pi_1^{et}(\sX,x)$ is canonically isomorphic to the semi-direct product
$\pi_1^{et}(X,x_0)^n\rtimes S^n$ for the natural action of $S^n$ on $\pi_1^{et}(X,x_0)^n$. 
\end{lemma}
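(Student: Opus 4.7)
The plan is to realise $\pi_1^{et}(\sX,x)$ as a split extension of $S^n$ by $\pi_1^{et}(X^n,x)$ and then identify the kernel with $\pi_1^{et}(X,x_0)^n$ via the \'etale K\"unneth formula.

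First, I would observe that, since $S^n$ is a finite (constant) group, the atlas map $p\,:\,X^n\,\longrightarrow\, \sX\,=\,[X^n/S^n]$ is a finite \'etale Galois cover of stacks with group $S^n$. As $X$ is integral and $k$ is algebraically closed, $X^n$ is connected, so the standard short exact sequence for a connected Galois \'etale cover (see \cite{noohi} in the stacky setting, or \cite{sga1} for schemes) gives
$$1\,\longrightarrow\, \pi_1^{et}(X^n,x) \,\longrightarrow\, \pi_1^{et}(\sX,x) \,\longrightarrow\, S^n \,\longrightarrow\, 1.$$

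Next, I would apply the \'etale K\"unneth theorem: since $X$ is proper and geometrically connected over the algebraically closed field $k$, iterating the appropriate result from \cite{sga1} yields a canonical isomorphism
$$\pi_1^{et}(X^n,x)\;\stackrel{\sim}{\longrightarrow}\;\pi_1^{et}(X,x_0)^n$$
induced by the $n$ projections, each of which sends $x\,=\,(x_0,\ldots,x_0)$ to $x_0$. Properness of $X$ is essential here, and this step is the principal non-formal input of the proof and the expected main obstacle.

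Third, I would produce a splitting of the extension. Because the closed point $x\,=\,(x_0,\ldots,x_0)$ is fixed by the $S^n$-action on $X^n$, the $S^n$-equivariant inclusion $\Spec(k)\hookrightarrow X^n$ descends to a $1$-morphism
$$s\,:\,BS^n\,=\,[\Spec(k)/S^n]\,\longrightarrow\, \sX.$$
Applying $\pi_1^{et}$ and using $\pi_1^{et}(BS^n)\,=\,S^n$ yields a homomorphism $s_*\,:\,S^n\,\longrightarrow\, \pi_1^{et}(\sX,x)$. A direct computation of the $2$-fibre product $BS^n\times_\sX X^n$ identifies $s^*p$ with the universal $S^n$-torsor $\Spec(k)\,\longrightarrow\, BS^n$, and hence the composition $S^n\,\stackrel{s_*}{\longrightarrow}\, \pi_1^{et}(\sX,x)\,\longrightarrow\, S^n$ is the identity, so that $s_*$ splits the extension.

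Finally, conjugation by $s_*$ on the kernel must, by naturality of the K\"unneth isomorphism with respect to the permutation action of $S^n$ on $X^n$, correspond to the canonical permutation action of $S^n$ on $\pi_1^{et}(X,x_0)^n$; this last verification is immediate from functoriality because the $\sigma\in S^n$-automorphism of $X^n$ intertwines the $i$-th and the $\sigma(i)$-th projections. Combining the three ingredients --- the Galois extension, the K\"unneth identification of the kernel, and the splitting with its permutation action --- one obtains the claimed canonical isomorphism $\pi_1^{et}(\sX,x)\,\cong\,\pi_1^{et}(X,x_0)^n\rtimes S^n$.
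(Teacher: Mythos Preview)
Your proposal is correct and follows essentially the same approach as the paper: both arguments obtain the short exact sequence from the Galois \'etale cover $X^n\to\sX$, identify the kernel via the \'etale K\"unneth formula (the paper cites \cite[X.1.3, X.1.7]{sga1}), split via the residual gerbe $[x/S^n]\to\sX$ at the diagonal point, and then verify that the resulting conjugation action is the permutation action. The only notable difference is in this last verification: you invoke functoriality of K\"unneth under the permutation automorphisms of $X^n$, whereas the paper checks the action concretely on geometric fibers of the cofinal family of covers $Y^n\to\sX$ with $Y\to X$ Galois; your argument is terser but equally valid.
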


\begin{proof}
The group $\pi_1^{et}(X^n,x)$ is canonically identified with $\pi_1^{et}(X,x_0)^n$
because $X$ is proper and integral (see \cite[X.1.3 and X.1.7]{sga1}).
Since the quotient map $X^n\,\longrightarrow\, \sX$ is a Galois \'etale cover with Galois
group $S^n$, we have a short exact sequence 
$$ 1 \,\longrightarrow\, \pi_1^{et}(X,x_0)^n \,\longrightarrow\, \pi_1(\sX,x)
\,\longrightarrow\, S^n\,\longrightarrow\, 1\, . $$
Moreover, the isotropy group of the point $x$ in $X^n$ is precisely $S^n$, thus yielding a representable $1$-morphism 
$$ [x/S^n] \,\longrightarrow\, \sX$$
which induces a right splitting
$$ \pi_1^{et}([x/S^n],x)\,=\,S^n \,\stackrel{\theta}{\longrightarrow}\, \pi_1^{et}(\sX,x)$$ of the above short exact
sequence.

It remains to show that for any $\sigma\,\in\, S^n$, the conjugation
$\pi_1^{et}(X,x)^n$ by $\theta(\sigma)$ coincides with the natural action
of $\sigma$ induced by permuting the factors. To show that two given elements
of $\pi_1^{et}(\sX,x)$ coincide, it is enough to show that their actions
on the geometric fibers over $x$ of Galois \'etale covers $\sY\,\longrightarrow
\, \sX$ agree. In fact it suffices to consider only Galois \'etale covers
$$
\widetilde{f}\,:\, \sY \,=\, Y^n\,\longrightarrow\, \sX
$$
which are compositions of the form
$$
Y^n\, \stackrel{(f,\cdots ,f)}{\longrightarrow}\, X^n\, \longrightarrow\, \sX\, ,
$$ 
where $f\,:\,Y\,\longrightarrow\, X$ is a Galois \'etale cover, because
these covers $\widetilde{f}$ are cofinal in
the category of Galois \'etale covers of $\sX$; here $X^n \,\longrightarrow\, \sX$
is the quotient map. Let
$f^{-1}(x)$ denote the fiber of $f$ over the point $x$. Then $\widetilde{f}^{-1}(x)\,
=\,f^{-1}(x)^n$. We note that the action of $S^n$ via $\theta$ on $f^{-1}(x)^n$ is
simply the natural permutation action. Indeed, this can be deduced from the fact that the
following diagram is Cartesian 
$$\xymatrix{ 
[f^{-1}(x)^n/S^n]\ar[r]\ar[d] & Y^n \ar[d]^{\widetilde{f}} \\
[x/S^n]\ar[r] & \sX
}$$ 
where $[f^{-1}(x)^n/S^n]$ is the stack quotient by the permutation action of
$S^n$ on $f^{-1}(x)^n$. Now if  $$ \widetilde{\phi}=(\phi_1,\cdots ,\phi_n)\,\in\,
\Gal(Y^n/X^n)\, , $$
then for $\sigma \,\in\, S^n$, the equality 
$$\theta(\sigma)\cdot \widetilde{\phi}\cdot \theta(\sigma)^{-1}\,=\,
(\phi_{\sigma^{-1}(1)},\cdots ,\phi_{\sigma^{-1}(n)})$$
can be deduced by comparing their actions on $f^{-1}(x)^n$. This proves the claim that
the action of $S^n$ on $\pi_1^{et}(X^n,x)$ is the same as the one induced by the
permutation action of $S^n$ on $X^n$.
\end{proof}

\begin{lemma}\label{refinedav}
Let $\sX/k$ be a separated Deligne--Mumford stack over a noetherian base scheme and
$\pi\,:\,\sX\,\longrightarrow\, X$ be its coarse moduli space. Then \'etale locally
around any point $p$ of $X$, the stack $\sX$ can be written as a quotient of a $k$-scheme
by the isotropy group of $\sX$ at $p$. 
\end{lemma}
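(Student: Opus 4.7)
\textbf{Proof plan for Lemma \ref{refinedav}.}
The plan is to establish the standard local structure theorem for separated Deligne--Mumford stacks, along the lines of Abramovich--Vistoli. Since the assertion is étale local on $X$, I would fix a geometric point $\bar p$ of $\sX$ lying over $p$ with isotropy $G := G_{\bar p}$, choose an étale presentation $f\colon U \to \sX$ from a scheme $U$, and pick a lift $u \in U$ of $\bar p$. After shrinking $U$ and replacing $X$ by an étale neighborhood of $p$, we may arrange that $k(u) \to k(\bar p)$ is an isomorphism, so that the scheme-theoretic fiber $f^{-1}(\bar p)$ consists of the single $G$-orbit through $u$.

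The key step is to turn the étale groupoid $R := U\times_\sX U \rightrightarrows U$ into a transformation groupoid near $u$. Because $\sX$ is separated and Deligne--Mumford, both projections $s,t\colon R \to U$ are étale and the map $(s,t)\colon R \to U\times U$ is separated; its fiber over $(u,u)$ is canonically identified with $G$. After further étale shrinking of $U$ about $u$, the $|G|$ preimages of $(u,u)$ in $R$ spread out into pairwise disjoint open-and-closed components $R_g \subset R$, each a section of $s$ (since $s$ is étale and separated), hence each the graph of an étale automorphism $\phi_g$ of a neighborhood $V$ of $u$ in $U$. The composition law in $R$ forces $g \mapsto \phi_g$ to be a group homomorphism, producing a genuine $G$-action on $V$ under which $R|_V \cong V \times G$ as a groupoid over $V$.

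Finally, the inclusion $V \times G \hookrightarrow R|_V$ of groupoids identifies $[V/G]$ with an open substack of $\sX$, namely the pullback $\sX \times_X X'$ for $X'$ the image of $V \to X$; that $X'\to X$ is étale around $p$ follows from property (M4) of the coarse moduli space recalled in Remark \ref{coarsenoohi}, applied to $V \to X'$. The main obstacle is the combinatorial separation step: one must guarantee that after shrinking, the components $R_g$ are not only disjoint but actually exhaust $R|_V$, i.e., that no ``extra'' automorphism components appear near the diagonal. This is where separatedness of $\sX$ (which makes each $R_g$ closed in $R$) combined with finiteness of $G$ and étaleness of $s$ does the work: an étale-local count of components of $R \to U$ above $u$ matches $|G|$ exactly, so no spurious components can arise after suitable shrinking.
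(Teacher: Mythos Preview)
Your approach is correct but takes a different route from the paper's. You essentially reprove the Abramovich--Vistoli local structure result from scratch, decomposing the \'etale groupoid $R=U\times_{\sX}U$ near $u$ directly into an action groupoid for the isotropy $G_p$. The paper instead \emph{cites} \cite[Lemma~2.2.3]{av} as a black box to get $\sX=[V/G]$ \'etale locally for \emph{some} finite group $G$, and then performs an elementary refinement: for each $g\in G\setminus G_{\widetilde p}$ one removes the closed fixed locus $Z_g$ from $V$, obtaining a Zariski open $V'\ni\widetilde p$ on which every isotropy group is contained in $G_{\widetilde p}$; then $[V'/G_{\widetilde p}]\to[V/G]=\sX$ is finite \'etale and, by Remark~\ref{coarsenoohi}, arises from a finite \'etale cover of the coarse moduli, which furnishes the required \'etale neighborhood of $p$. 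Your argument is more self-contained and lands directly on $G_p$, but the ``main obstacle'' you flag---ruling out spurious components of $R|_V$ beyond the $R_g$---is exactly the nontrivial content of the cited AV lemma; it is handled most cleanly not by a pure count over $u$ (since $s\colon R\to U$ is separated \'etale but a priori not finite) but by using finiteness of the diagonal $(s,t)\colon R\to U\times U$, which after passing to a suitable \'etale neighborhood forces $R|_V$ to be finite over $V$ of the correct degree. The paper's route is shorter precisely because it outsources this step.
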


\begin{proof}
By \cite[Lemma 2.2.3]{av}, we may assume that $\sX\,=\,[V/G]$, where $G$ is a finite
group acting on a scheme $V$. Let $\widetilde{p}$ be a point of $V$ lying over $p$, and
let $G_{\widetilde{p}}\subset G$ be the isotropy group of $\widetilde{p}$. The group
$G_{\widetilde{p}}$ is canonically identified with the isotropy group $G_p$ of
the stack $\sX$ at the point $p$.

We claim that there exists a Zariski open neighborhood of $\widetilde{p}$ in $V$, such 
that the isotropy group of all points in this neighborhood is contained in 
$G_{\widetilde{p}}$.

To prove the above claim, note that for any $g\in G$, the fixed locus $Z_g$ of $g$ 
is a closed subscheme of $V$. We now simply take this neighborhood to be the complement 
of $\bigcup_{g\in G_{\widetilde{p}}} Z_g$.

We now replace $V$ by this neighborhood and assume that $V$ itself has the  property that
all isotropy groups are contained in $G_{\widetilde{p}}$. This ensures that the map
$[V/G_{\widetilde{p}}]\,\longrightarrow\, [V/G]\,=\,\sX$, as well as the induced map on
coarse moduli spaces is finite
\'etale. Thus by Remark \ref{coarsenoohi}, the stack $[V/G_{\widetilde{p}}]$
is pullback of a finite \'etale cover of the coarse moduli space of $\sX$. In other words,
\'etale locally on the coarse moduli space, $\sX$ of the type $[V/G_{\widetilde{p}}]$, as
required. 
\end{proof}

\begin{notation}[Specialization map on isotropy groups]\label{notspe}
Let $\sX/k$ be a separated Deligne--Mumford stack. Then for any geometric point
$q$ of $\sX$, specializing to a geometric point $p$,
one can produce a homomorphism
\begin{equation}\label{qp}
\phi_{q,p}\,:\,G_q\,\longrightarrow\, G_p
\end{equation}
which is well defined up to conjugation by an element of $G_p$;
we will recall its construction. To define $\phi_{q,p}$,
we may work \'etale locally on the coarse moduli space of $\sX$ and by
Lemma \ref{refinedav} assume that $\sX$ is the quotient $[V/G_p]$, where $G_p$ acts on $V$
with a fixed point $\widetilde{p}$ lying over $p$. Since the point $q$ of $\sX$ specializes
to $p$, there exists a point $\widetilde{q}$ (not necessarily unique) of $V$, lying over
$q$ which specializes to $\widetilde{p}$. As $\widetilde{q}$ specializes to
$\widetilde{p}$, the isotropy group $G_{\widetilde{q}}$ is a subgroup of the isotropy group
$G_{\widetilde{p}}$ of $p$. However, we also have natural identifications
$G_{\widetilde{p}}\,=\, G_p$ and $G_{\widetilde{q}}\,=\, G_q$. This gives rise to an
inclusion $\phi_{q,p}\,:\,G_q\, \longrightarrow\, G_p$. Note however, a different choice
of $\widetilde{q}$ leads to another homomorphism $G_q\,\longrightarrow\, G_p$ which
differs from the above choice by an inner automorphism of $G_p$. Thus $\phi_{q,p}$ is
well defined up to an inner automorphism. 
\end{notation}

\begin{lemma}\label{specialisation} Let $p,q$ be two geometric points of a
connected separated
Deligne--Mumford stack $\sX$ with $q$ specializing to $p$. Let $x$ be any other
geometric point of $\sX$. Then the following diagram commutes up to an
inner automorphism of $\pi_1^{et}(\sX,x)$:
$$\xymatrix{
G_q \ar[d]_{\phi_{q,p}}\ar[rr]^{\phi_q^x\ \ \ \ } & & \pi_1^{et}(\sX,x) \\
G_p \ar[rru]_{\phi_p^x} &  &
}$$
(see \eqref{qp} and \eqref{re1} for the homomorphisms).
\end{lemma}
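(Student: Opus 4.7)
The plan is to verify the commutativity by making a compatible choice of geometric paths at $q$ and $p$, and then comparing the resulting actions on fibres of finite \'etale covers of $\sX$.

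Both $\phi_q^x$ and $\phi_p^x$ are determined only up to an inner automorphism of $\pi_1^{et}(\sX, x)$, corresponding to a choice of isomorphism of fibre functors from $q$ (resp.\ $p$) to $x$. I would exploit this freedom by taking the path from $q$ to $x$ to be the concatenation of the specialization path $q\,\leadsto\, p$ and a fixed path from $p$ to $x$; with this choice one expects the equality $\phi_q^x\,=\,\phi_p^x\circ\phi_{q,p}$ on the nose, after which the statement up to inner automorphism follows.

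To set up the specialization path concretely, I would apply Lemma \ref{refinedav} to reduce, \'etale-locally around $p$, to the case $\sX\,=\,[V/G_p]$, where $V$ is a scheme with a $G_p$-action and $\widetilde{p}\,\in\, V$ is a fixed point lying over $p$. Choose a point $\widetilde{q}\,\in\, V$ over $q$ that specializes to $\widetilde{p}$; then by construction of Notation \ref{notspe} the map $\phi_{q,p}$ in \eqref{qp} is the inclusion $G_{\widetilde{q}}\,\hookrightarrow\, G_{\widetilde{p}}\,=\,G_p$. For any Galois \'etale cover $\sW\,\longrightarrow\, \sX$ corresponding to a $G_p$-equivariant finite \'etale cover $U\,\longrightarrow\, V$, the homomorphism $\phi_p$ records the action of $G_p$ on the geometric fibre $U_{\widetilde{p}}$, and $\phi_q$ records the action of $G_q\,=\,G_{\widetilde{q}}$ on $U_{\widetilde{q}}$. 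Because $U\,\longrightarrow\, V$ is \'etale, the specialization $\widetilde{q}\,\leadsto\, \widetilde{p}$ yields a canonical bijection $U_{\widetilde{q}}\,\longrightarrow\, U_{\widetilde{p}}$ which, by naturality of specialization, is equivariant for $G_{\widetilde{q}}$ acting on the right-hand side via its inclusion into $G_p$. Transporting this identification of fibres along the chosen path from $p$ to $x$ yields $\phi_q^x\,=\,\phi_p^x\circ\phi_{q,p}$ on every Galois \'etale cover of $\sX$, hence in $\pi_1^{et}(\sX, x)$ itself.

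The step I expect to be the main obstacle is the book-keeping of the $2$-categorical data: one must check that the canonical specialization bijection on fibres intertwines the natural identifications $G_{\widetilde{q}}\,=\,G_q$ and $G_{\widetilde{p}}\,=\,G_p$ in precisely the way used to define $\phi_{q,p}$ in Notation \ref{notspe}, and that a different choice of $\widetilde{q}$ above $q$ (which is the source of the inner-automorphism ambiguity in $\phi_{q,p}$) gets absorbed into the inner-automorphism ambiguity of $\phi_p^x$ via conjugation by an element of $G_p$. Once this compatibility is checked, the equality of the two homomorphisms is formal, and the overall ambiguity in the choice of specialization path matches exactly the inner-automorphism ambiguity that the statement permits.
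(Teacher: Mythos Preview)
Your proposal is correct and follows essentially the same approach as the paper's proof: reduce via Lemma~\ref{refinedav} to a local quotient presentation $[V/G_p]$, identify $\phi_{q,p}$ with the inclusion $G_{\widetilde{q}}\hookrightarrow G_{\widetilde{p}}$, and verify commutativity by comparing the induced actions on geometric fibres of \'etale covers. The paper streamlines two of your steps: it sets $x=p$ at the outset (absorbing your ``concatenate a path from $p$ to $x$'' into the inner-automorphism ambiguity), and it replaces $V$ by its strict henselization $\widetilde{V}$ at $\widetilde{p}$, so that $G_p\to\pi_1^{et}([\widetilde{V}/G_p],p)$ is an isomorphism and the entire check reduces to the single universal cover $\widetilde{V}\to[\widetilde{V}/G_p]$ rather than all $G_p$-equivariant $U\to V$.
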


\begin{proof}
Without loss of generality we may assume that $x\,=\,p$ and hence $\phi_p^x\,=\,\phi_p$,
where $\phi_p$ is the canonical homomorphism from $G_p$ to $\pi_1^{et}(\sX,p)$
induced by $$[p/G_p] \,\longrightarrow\, \sX\, .$$
Let $V\, ,\widetilde{p}$ and $\widetilde{q}$ be as in Notation \ref{notspe}. We denote by
$\widetilde{V}$ the strict henselization of $V$ at $\widetilde{p}$ and lift points
$\widetilde{p}$ and $\widetilde{q}$ to points of $\widetilde{V}$, which are also denoted
by $\widetilde{p}, \widetilde{q}$ for ease of notation. The morphisms $[p/G_p]\,
\longrightarrow\, \sX$ and $[q/G_q]\,\longrightarrow\, \sX$ factor through 
$[\widetilde{V}/G_p]$. Therefore, to prove the lemma, we may replace $\sX$ by the stack
$[\widetilde{V}/G_q]$. Since $\widetilde{V}$ is strict henselian, the homomorphism
$$G_p \,\longrightarrow\, \pi_1^{et}([\widetilde{V}/G_p],p)$$
is a bijection. Thus it remains to show that the homomorphism
$$ \phi_q^p\,:\, G_q \,\longrightarrow\, \pi_1^{et}([\widetilde{V}/G_p],p)$$ defined in
\eqref{re1}, coincides, up to an inner automorphism, with the inclusion of
isotropy groups $G_q \,\inj\, G_p$. However, this follows easily by comparing the
actions of $G_q$ and $G_p$ on the geometric fibers of the universal cover
$\widetilde{V}\,\longrightarrow\, [\widetilde{V}/G_p]$. 
\end{proof}

\begin{proof}[Proof of Theorem \ref{sym}]
Let $\sX\,=\,[X^n/S^n]$, so that
$\pi\,:\,\sX\,\longrightarrow\, {\rm Sym}^n(X)$ is its coarse moduli space. Let
$x$ and $x_0$ be as in the statement of the theorem. We think of $x$ also as a
geometric point of both $\sX$ and $X^n$. The quotient map 
$$ f\,:\,X^n\,\longrightarrow\, \sX$$ 
is a Galois \'etale cover with Galois group $S^n$. By Lemma \ref{sn}, we have 
$$ \pi_1^{et}(\sX,x) \,=\, \pi_1^{et}(X,x_0)^n\rtimes S^n\, .$$
Moreover, the map 
$$ [x/S^n] \,\longrightarrow\, \sX$$ 
induces a section $t\,:\, S^n\,\longrightarrow\, \pi_1^{et}(\sX,x)$ of
$\pi_1^{et}(\sX,x)\,\longrightarrow\,
S^n$. The image $t(S^n)$ coincides with the image of the homomorphism
$$ G_x \,=\, S^n \,\longrightarrow\, \pi_1^{et}(\sX,x) $$
in \eqref{gq}.

Using notation from Section \ref{sec2}, we claim that the closed
normal subgroup of $\pi_1^{et}(\sX,x)$ generated by image of $t$ is
entire $N_x$ (see Theorem \ref{noohi}). By definition of $N_x$, to prove the claim
it is enough to show 
that given any other point $p$, the image of $$ G_p\,\longrightarrow\,
\pi_1^{et}(\sX,x)$$ 
is contained inside the image of $t$. Now, lift $p$ to a geometric point of 
$X^n$, and let $I\,\subset\, \{1,\cdots ,n\}$ be the maximal subset such that 
the coordinates of $p$ indexed by $I$ are equal. We have the subvariety 
$X^n_I\,\subset\, X^n$ defined by the condition that all coordinates indexed by 
$I$ are equal. Let $q$ be the generic point of $X^n_I$. Then $q$ 
specializes to $p$ and the inclusion $G_q\,\inj\, G_p$ is an isomorphism. But 
$q$ also specializes to $x$. The theorem now follows from 
Lemma \ref{specialisation}.
\end{proof}


\begin{thebibliography}{AAAA}

\bibitem[AV]{av} D. Abramovich and A. Vistoli, Compactifying the space of stable
maps, {\it Jour. Amer. Math. Soc.} {\bf 15} (2002), 27--75.

\bibitem[DT]{DT} A. Dold and R. Thom, Quasifaserungen und unendliche
symmetrische Produkte, {\it Ann. of Math.} {\bf 67} (1958), 239--281.

\bibitem[Fo]{Fo} J. Fogarty, Algebraic families on an algebraic surface, {\it Amer.
Jour. Math.} \textbf{90} (1968), 511--521.

\bibitem[EGA]{ega4} Grothendieck, A. El\'ements de g\'eom\'etrie
alg\'ebrique. IV. \'Etude locale des sch\'emas et des
morphismes de sch\'emas. III. {\it Inst. Hautes \'Etudes Sci. Publ. Math.} No. {\bf 28} 1966 255 pp.

\bibitem[SGA]{sga1}A. Grothendieck and M. Raynaud, {\it Rev\^etements \'etales et
groupe fondamental (SGA 1)}.  S\'eminaire de g\'eom\'etrie alg\'ebrique du Bois
Marie 1960--61, Lect. Notes Math. {\bf 224}, Springer, Berlin,
arXiv:math/0206203 [math.AG].

\bibitem[Ko1]{Ko}J. Koll\'ar, Shafarevich maps and plurigenera of algebraic
varieties, {\it Invent. Math.} {\bf 113} (1993), 177--215.

\bibitem[Ko2]{ko2} J. Koll\'ar, {\it private communication via email}, June 14,
2013.

\bibitem[No]{noohi} B. Noohi, Fundamental groups of algebraic stacks, {\it 
Jour. Inst. Math. Jussieu} {\bf 3} (2004), 69--103.

\bibitem[KT]{KT} S. Kallel and W. Taamallah, The geometry and fundamental group
of permutation products and fat diagonals,
www-gat.univ-lille1.fr/$\sim$kallel/Papers/perprod2.pdf.

\end{thebibliography}
\end{document}